\DeclareMathOperator{\coin}{coin}
\DeclareMathOperator{\ill}{ill}
\DeclareMathOperator{\vol}{vol}
\DeclareMathOperator{\wcoin}{coin_{w}}
\newcommand\numberthis{\addtocounter{equation}{1}\tag{\theequation}}
\begin{document}

\newtheorem{theorem}{Theorem}[section]
\newtheorem{question}[theorem]{Question}
\newtheorem{example}[theorem]{Example}
\newtheorem{observation}[theorem]{Observation}
\newtheorem{remark}[theorem]{Remark}
\newtheorem{fact}{Fact}
\newtheorem{proc}[theorem]{Procedure}
\newtheorem{conjecture}[theorem]{Conjecture}
\newtheorem{lemma}[theorem]{Lemma}
\newtheorem{proposition}[theorem]{Proposition}
\newtheorem{corollary}[theorem]{Corollary}
\newtheorem*{abst}{Abstract}
\newtheorem{definition}{Definition}
\newtheorem{problem}{Problem}
\newtheorem{algo}{Algorithm}

\title{On the covering index of convex bodies} 

\author{K\'{a}roly Bezdek and Muhammad A. Khan}

\date{}
 \maketitle

\begin{abstract}
Covering a convex body by its homothets is a classical notion in discrete geometry that has resulted in a number of interesting and long-standing problems. Swanepoel introduced the covering parameter of a convex body as a means of quantifying its covering properties. In this paper, we introduce two relatives of the covering parameter called covering index and weak covering index, which upper bound well-studied quantities like the illumination number, the illumination parameter and the covering parameter of a convex body. Intuitively, the two indices measure how well a convex body can be covered by a relatively small number of homothets having the same relatively small homothety ratio. We show that the covering index is a lower semicontinuous functional on the Banach-Mazur space of convex bodies. We further show that the affine $d$-cubes minimize covering index in any dimension $d$, while circular disks maximize it in the plane. Furthermore, the covering index satisfies a nice compatibility with the operations of direct vector sum and vector sum. In fact, we obtain an exact formula for the covering index of a direct vector sum of convex bodies that works in infinitely many instances. This together with a minimization property can be used to determine the covering index of infinitely many convex bodies. As the name suggests, the weak covering index loses some of the important properties of the covering index. Finally, we obtain upper bounds on the covering and weak covering index.      
\vspace{2mm}

\noindent \textit{Keywords and phrases:} convex body, Hadwiger Covering Conjecture, Boltyanski-Hadwiger Illumination Conjecture, covering index, covering parameter, illumination number, illumination parameter. 

\vspace{2mm}

\noindent \textit{MSC (2010):} 52C17, 52C15.
\end{abstract}

\section{Introduction}\label{intro}
Let ${\mathbb{E}}^d$ denote the $d$-dimensional Euclidean space with origin $o$. A $d$-dimensional convex body $K$ is a compact convex subset of ${\mathbb{E}}^d$ with nonempty interior. We denote the $d$-dimensional volume of $K$ by $\vol(K)$. Moreover, $K$ is $o$-\textit{symmetric} if $K=-K$. The \textit{Minkowski sum} or simply the \textit{vector sum} of two convex bodies $K,L\subseteq {\mathbb{E}}^d$ is defined by 
\[
K+L=\{k+l: k\in K, l\in L\}. 
\] 

A \textit{homothetic copy}, or simply a \textit{homothet}, of $K$ is a set of the form $M=\lambda K+x$, where $\lambda$ is a nonzero real number and $x\in {\mathbb{E}}^d$. If $\lambda>0$, then $M$ is said to be a \textit{positive homothet} and if in addition, $\lambda < 1$, we have a \textit{smaller positive homothet} of $K$. Let  $C^{d}$ denote a $d$-dimensional cube,  $B^{d}$ a $d$-dimensional ball, $\Delta^{d}$ a $d$-simplex and $\ell$ a line segment (or more precisely, an affine image of any of these convex bodies). We use the symbol ${\cal{K}}^{d}$ for the metric space of $d$-dimensional convex bodies under the (multiplicative) Banach-Mazur distance $d_{BM}(\cdot,\cdot)$. That is, for any $K, L \in {\cal{K}}^{d}$, 
\[d_{BM}(K, L)= \inf \left\{\delta \geq 1 : L-b\subseteq T(K-a)\subseteq \delta (L-b), a \in K, b\in L\right\}, 
\]
\noindent where the infimum is taken over all invertible linear operators $T:{\mathbb{E}}^{d}\longrightarrow {\mathbb{E}}^{d}$ \cite{schneider1}.  

The famous Hadwiger Covering Conjecture \cite{gohberg1,hadwiger1,levi1} -- also called the Levi-Hadwiger Conjecture or the Gohberg-Markus-Hadwiger Conjecture -- states that any $K \in {\cal{K}}^{d}$ can be covered by $2^{d}$ of its smaller positive homothetic copies with $2^{d}$ homothets needed only if $K$ is an affine $d$-cube. This conjecture appears in several equivalent forms one of which we discuss here.  Boltyanski \cite{boltyanski1} and Hadwiger \cite{hadwiger2} introduced two notions of illumination of a convex body, the former being `\textit{illumination by directions}' while the latter being `\textit{illumination by points}'. The two notions are actually equivalent \cite{boltyanski1} and $K$ is said to be \textit{illuminated} if all points on the boundary of $K$ are illuminated (in either sense). The \textit{illumination number} $I(K)$ of $K$ is the smallest $n$ for which $K$ can be illuminated by $n$ points (resp., directions). Furthermore, Boltyanski \cite{boltyanski1,boltyanski2} showed that $I(K)=n$ if and only if the smallest number of smaller positive homothets of $K$ that can cover $K$ is $n$. Thus the Hadwiger Covering Conjecture can be reformulated as the Boltyanski-Hadwiger Illumination Conjecture, which states that for any $d$-dimensional convex body $K$ we have $I(K)\leq 2^d$, and $I(K) = 2^d$ only if $K$ is an affine $d$-cube.

Despite the interest in these problems they have only been solved in general in two dimensions or for select few classes of convex bodies. We refer to \cite{bezdek-book,brass1,martini1} for detailed surveys of these and other related problems of homothetic covering and illumination. This apparent difficulty has recently led to the introduction of quantitative versions of illumination and covering problems. For instance, it can be seen that in the definition of illumination number $I(K)$, the light sources can be taken arbitrarily far from $K$. However, it seems natural to start with a relatively small number of light sources and quantify how far they need to be from $K$ in order to illuminate it. This is the idea behind the illumination parameter $\ill(K)$ of an $o$-symmetric convex body $K$ defined by the first named author \cite{bezdek-illumination1} as follows. 
\[
\ill(K)=\inf \left\{\sum_{i}\left\|p_{i}\right\|_{K} : \{p_{i}\}\textnormal{ illuminates } K,p_{i}\in {\mathbb{E}}^{d}\right\}, 
\]  
\noindent where $\left\|x\right\|_{K}=\inf \{\lambda>0: x\in \lambda K\}$ is the norm of $x\in {\mathbb{E}}^{d}$ generated by the symmetric convex body $K$. Clearly, $I(K)\leq \ill(K)$, for $o$-symmetric convex bodies. Several authors have investigated the illumination parameter of $o$-symmetric convex bodies \cite{bezdek-illumination1,bezdek-book,kiss1,martini1}, determining exact values in several cases. 

Inspired by the above quantification ideas, Swanepoel \cite{swanepoel1} introduced the covering parameter of a $d$-dimensional convex body to quantify its covering properties. This is given by 
\[
C(K)=\inf \left\{\sum_{i}(1-\lambda_{i})^{-1}:K\subseteq \bigcup_{i}(\lambda_{i}K+t_{i}), 0<\lambda_{i}<1,t_{i}\in {\mathbb{E}}^{d}\right\}. 
\]  

Thus large homothets are penalized in the same way as far away light sources are penalized in the definition of illumination parameter. Note here $K$ is not assumed to have any symmetry as the definition of covering parameter does not make use of the norm $\left\|\cdot\right\|_{K}$. In the same paper, Swanepoel obtained the following Rogers-type upper bounds on $C(K)$ when $d\geq 2$.  
\begin{equation}\label{swanepoel1} 
C(K)<\left\{\begin{split} e2^{d}d(d+1)(\ln d+\ln \ln d + 5)=O(2^{d}d^{2}\ln d), \ \ \ \ \ \ \ \ \ & \textnormal{ if } K \textnormal{ is } o\textnormal{-symmetric},\\
e\binom{2d}{d}d(d+1)(\ln d+\ln \ln d + 5)=O(4^{d}d^{3/2}\ln d), \ \ & \ \textnormal{otherwise}.
\end{split} \right.\ \ \ 
\end{equation}
\noindent He further showed that if $K$ is $o$-symmetric, then
\begin{equation}\label{swanepoel2}
\ill(K) \leq 2 C(K).
\end{equation} 

Despite the usefulness of the covering parameter, not much is known about it. For instance, we do not know whether $C(\cdot)$ is lower or upper semicontinuous on ${\cal{K}}^{d}$ and the only known exact value is $C(C^{d})=2^{d+1}$. The aim of this paper is to come up with a more refined quantification of covering in terms of the covering index with the Hadwiger Covering Conjecture as the eventual goal. We show that the covering index possesses a number of useful properties such as upper bounding several quantities associated with the covering and illumination of convex bodies, lower semicontinuity, compatibility with direct vector sum and Minkowski sum, a complete characterization of minimizers and the development of tools to compute its exact values for several convex bodies. Furthermore, the covering index gives rise to a number of open problems about the homothetic covering behavior of convex bodies in general, and $d$-dimensional balls and ball-polyhedra in particular. In Section \ref{variations}, we discuss a variant of the covering index that is perhaps more natural, but possesses weaker properties. Finally, in Section \ref{improved}, we obtain upper bounds on the covering and weak covering indices. 


\section{The covering index} \label{prelim}
Before formally defining the covering index, we describe two other related ideas that, in addition to the covering parameter, influence our definition of the covering index. 

Given a positive integer $m$, Lassak \cite{lassak-gamma} introduced the \textit{$m$-covering number} of a convex body $K$ as the minimal positive homothety ratio needed to cover $K$ by $m$ homothets. That is, 
\[\gamma_{m}(K)=\inf \left\{\lambda >0: K\subseteq \bigcup_{i=1}^{m}(\lambda K+t_{i}), t_{i}\in {\mathbb{E}}^{d}, i=1,\ldots, m\right\}.
\]
Lassak showed that the $m$-covering number is well-defined and studied the special case $m=4$ for planar convex bodies. Zong \cite{zong1} studied $\gamma_{m}:{\cal{K}}^{d} \longrightarrow \mathbb{R}$ as a functional and proved it to be uniformly continuous for all $m$ and $d$. He did not use the term $m$-covering number for $\gamma_{m}(K)$ and simply referred to it as the smallest positive homothety ratio. Obviously, any $K \in {\cal{K}}^{d}$ can be covered by $2^{d}$ smaller positive homothets if and only if $\gamma_{2^{d}}(K)<1$. Zong used these ideas to propose a possible computer-based approach to attack the Hadwiger Covering Conjecture \cite{zong1}.   


%

Given convex bodies $K, L\in {\cal{K}}^{d}$, the \textit{covering number of $K$ by $L$} is denoted by $N(K,L)$ and is defined as the minimum number of translates of $L$ needed to cover $K$. Among covering problems, the problem of covering the $d$-dimensional ball by smaller positive homothets has generated a lot of interest. One question that has been asked repeatedly is: what is the value of $N(B^{d},\lambda B^{d})$ \cite{rogers-ball,verger-gaugry}? In particular, the case $\lambda = 1/2$ has attracted special attention. Verger-Gaugry \cite{verger-gaugry} showed that 
\[N\left(B^{d},\frac{1}{2}B^{d}\right) = O(2^{d} d^{3/2} \ln d).
\]

We can now present the formal definition of covering index. 

\begin{definition}\label{coin-def}
Let $K$ be a $d$-dimensional convex body. We define the \textit{covering index} of $K$ as 
\[
\coin(K)=\inf \left\{\frac{m}{1-\gamma_{m}(K)}: \gamma_{m}(K)\leq 1/2, m\in \mathbb{N}\right\}. 
\]
\end{definition}

Intuitively, $\coin(K)$ measures how $K$ can be covered by a relatively small number of positive homothets all corresponding to the same relatively small homothety ratio. We note that $\coin(K)$ is an affine invariant quantity assigned to $K$, i.e., if $A:{\mathbb{E}}^{d}\longrightarrow {\mathbb{E}}^{d}$ is an invertible linear map then $\coin(A(K))=\coin(K)$. 

We have the following relationship. 
\begin{proposition}\label{bound}
For any $o$-symmetric $d$-dimensional convex body $K$,  
\[ I(K)\leq \ill(K)\leq 2C(K) \leq 2\coin(K), \]
\noindent and in general for $K\in {\cal{K}}^{d}$, 
\[ I(K)\leq C(K) \leq \coin(K). \]
\end{proposition}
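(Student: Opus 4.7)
\smallskip

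\noindent\textbf{Proof plan.} The chain of inequalities splits into four links, two of which are already present in the literature and two of which should follow directly from the definitions. I would treat them in the order they are listed, but separate the symmetric and the general case only when necessary.

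The first link $I(K)\leq \ill(K)$ (in the symmetric case) is essentially a counting argument: in the definition of $\ill(K)$ the illuminating points $p_i$ must lie outside $K$ (a point in the interior of $K$ illuminates no boundary point of $K$), hence $\|p_i\|_K\geq 1$ and $\sum_i \|p_i\|_K \geq n \geq I(K)$ for any admissible family; taking the infimum gives the desired bound. The second link $\ill(K)\leq 2C(K)$ is Swanepoel's inequality~\eqref{swanepoel2}, which I would just cite. The fourth link $2C(K)\leq 2\coin(K)$ is obtained by multiplying the general inequality $C(K)\leq \coin(K)$ by $2$, so the only substantive steps left are $I(K)\leq C(K)$ and $C(K)\leq \coin(K)$ in the general case.

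For $I(K)\leq C(K)$ I would observe that any admissible family in the definition of $C(K)$, i.e. any covering $K\subseteq \bigcup_{i=1}^{n}(\lambda_i K+t_i)$ with $0<\lambda_i<1$, is by Boltyanski's theorem a witness that $I(K)\leq n$; since each $\lambda_i\in (0,1)$ gives $(1-\lambda_i)^{-1}>1$, we have $\sum_i (1-\lambda_i)^{-1}> n\geq I(K)$, and the infimum yields $C(K)\geq I(K)$. For $C(K)\leq \coin(K)$, fix $m\in\mathbb{N}$ with $\gamma_m(K)\leq 1/2$ and let $\lambda\in (\gamma_m(K),1)$. By the definition of $\gamma_m$ there exist translates $t_1,\dots,t_m\in\mathbb{E}^d$ such that $K\subseteq \bigcup_{i=1}^{m}(\lambda K+t_i)$; this is a valid covering for $C(K)$ giving $C(K)\leq \sum_{i=1}^{m}(1-\lambda)^{-1}=m/(1-\lambda)$. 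Letting $\lambda\downarrow \gamma_m(K)$ (which is permissible because $\gamma_m(K)\leq 1/2<1$) I obtain $C(K)\leq m/(1-\gamma_m(K))$, and taking the infimum over $m$ with $\gamma_m(K)\leq 1/2$ yields $C(K)\leq \coin(K)$.

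There is no serious obstacle here; the only slightly delicate point is the passage from $\lambda>\gamma_m(K)$ to $\lambda=\gamma_m(K)$ in the last step. One can either argue by the continuity of $m/(1-\lambda)$ in $\lambda$ at $\lambda=\gamma_m(K)<1$, or invoke a standard compactness argument (translates $t_i$ lie in a bounded region, so one can pass to a convergent subfamily) to see that the infimum in the definition of $\gamma_m$ is actually attained. Either way the inequality is established, completing the proof.
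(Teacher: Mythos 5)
Your proof is correct and follows essentially the same route as the paper: the authors likewise cite Swanepoel's inequality for $\ill(K)\leq 2C(K)$ and derive $C(K)\leq\coin(K)$ by observing that the infimum defining $\coin(K)$ ranges over a restricted subfamily (equal ratios $\lambda\leq 1/2$) of the coverings admissible for $C(K)$. Your handling of the passage from $\lambda>\gamma_m(K)$ to $\lambda=\gamma_m(K)$ is in fact slightly more careful than the paper's, which simply asserts the equality of the two infima.
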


Proposition \ref{bound} follows immediately from the definition of $\coin$, the relation (\ref{swanepoel2}) and the observation 
\begin{equation*}
\begin{split}
\coin(K) &=\inf \left\{\frac{m}{1-\gamma_{m}(K)}: \gamma_{m}(K)\leq 1/2, m\in \mathbb{N}\right\}\\
 &= \inf \left\{\frac{m}{1-\lambda}: K\subseteq \bigcup_{i=1}^{m}(\lambda K+t_{i}), 0<\lambda\leq 1/2,t_{i}\in {\mathbb{E}}^{d}, m\in \mathbb{N}\right\}\\
 &\geq C(K).
\end{split}
\end{equation*}

We remark that the inequality $\ill(K)\leq 2\coin(K)$ can also be derived directly by suitably modifying the proof of Proposition 1 of Swanepoel \cite{swanepoel1}. 

\subsection{Why $\gamma_{m}(K)\leq 1/2$?}
The reader may be a bit surprised to see the restriction $\gamma_{m}(K)\leq 1/2$. One immediate consequence of this restriction is that for any $K\in{\cal{K}}^{d}$,  
\begin{equation}\label{asymptotic}
N\left(K,\frac{1}{2}K\right)\leq \coin(K) \leq 2N\left(K,\frac{1}{2}K\right), 
\end{equation} 
that is, $\coin(K)=\Theta(N\left(K,\frac{1}{2}K\right))$. Therefore, $\coin(B^{d})$ (resp. $\coin(K)$) can be used to estimate $N\left(B^{d},\frac{1}{2}B^{d}\right)$ (resp. $N\left(K,\frac{1}{2}K\right)$), which is a quantity of special interest, and vice versa. 

However, there are other more compelling reasons for choosing $1/2$ as the threshold. To understand these better, we define  
\[
f_{m}(K)=\left\{\begin{split} \frac{m}{1-\gamma_{m}(K)}, \ \ \ \ \ & \ \textnormal{ if } 0< \gamma_{m}(K)\leq \frac{1}{2},\\
+\infty, \ \ \ \ \ \ \ \ \ \ \ \ \ \ & \ \textnormal{ if } \frac{1}{2}< \gamma_{m}(K)\leq 1.
\end{split}\right.
\]

Thus $\coin(K)=\inf \left\{f_{m}(K): m\in {\mathbb{N}}\right\}$. Later in Theorem \ref{continuity}, we show that for any $K,L\in {\cal{K}}^{d}$ and $m\in \mathbb{N}$ such that $\gamma_{m}(K)\leq 1/2$ and $\gamma_{m}(L)\leq 1/2$,  
\begin{equation}\label{half}
f_{m}(K)\leq d_{BM}(K,L) f_{m}(L),  
\end{equation}
and 
\begin{equation}\label{dim}
f_{m}(K)\geq \frac{d_{BM}(K,L)}{2d_{BM}(K,L)-1} f_{m}(L),    
\end{equation}
establishing a strong connection with the Banach-Mazur distance of convex bodies. The proofs of relations (\ref{half}) and (\ref{dim}) make extensive use of homothety ratios to be less than or equal to half. This shows that the `half constraint' in the definition of covering index results in a quantity with potentially nicer properties. In particular, relation (\ref{half}) is important as for each $m$, it implies Lipschitz continuity of $f_{m}$ on the subspace 
\begin{equation}\label{subspace}
{\cal{K}}^{d}_{m}:=\left\{K\in {\cal{K}}^{d}: \gamma_{m}(K)\leq 1/2\right\}, 
\end{equation}
which in turn leads to the continuity properties of $\coin$ discussed in Section \ref{monotonic-continuity}. We remark that from the proof of Theorem \ref{cube}, ${\cal{K}}^{d}_{m}\neq \varnothing$ if and only if $m\geq 2^{d}$. 


In Section \ref{variations}, we demonstrate what happens if we remove the restriction $\gamma_{m}(K)\leq 1/2$ from the definition of covering index. The resulting quantity, which we call the \textit{weak covering index} loses some important properties satisfied by the covering index. 

\section{Continuity}\label{monotonic-continuity}
In this section, we establish some important properties of $\coin$. The first observation, though trivial, helps in computing the exact values and upper estimates of $\coin$ for several convex bodies. 

\begin{lemma}[Minimization lemma]\label{monotonic}
Let $l< m$ be positive integers. Then for any $d$-dimensional convex body $K$ the inequality $f_{l}(K)> f_{m}(K)$ implies $m< f_{l}(K)$. 
\end{lemma}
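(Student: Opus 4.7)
The plan is to manipulate the defining inequality $f_l(K) > f_m(K)$ directly, using the piecewise definition of $f_m$, and to split into two cases depending on whether the left-hand side is finite.

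If $f_l(K) = +\infty$, then $m < +\infty = f_l(K)$ is automatic, so the only substantive case is $f_l(K) < +\infty$. In that case the hypothesis $f_m(K) < f_l(K)$ also forces $f_m(K) < +\infty$, so by the definition of $f_m$ both $\gamma_l(K)$ and $\gamma_m(K)$ lie in $(0, 1/2]$. Strict positivity here is the only non-algebraic ingredient: a one-line volume comparison shows that $m$ translates of $\lambda K$ cannot cover $K$ unless $m\lambda^d \ge 1$, hence $\gamma_m(K) \ge m^{-1/d} > 0$.

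With both covering ratios lying in $(0, 1/2]$, the denominators $1-\gamma_l(K)$ and $1-\gamma_m(K)$ are at least $1/2$, so I may clear them and preserve strict inequality. Doing so and rearranging rewrites $f_l(K) > f_m(K)$ as $m\gamma_l(K) - l\gamma_m(K) > m - l$. Dropping the strictly positive term $l\gamma_m(K)$ from the left side gives the stronger inequality $m\gamma_l(K) > m - l$, equivalently $1 - \gamma_l(K) < l/m$, which is exactly the desired conclusion $f_l(K) = l/(1-\gamma_l(K)) > m$.

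The whole argument is essentially a short algebraic manipulation, and there is no genuine obstacle. The only points requiring a moment of care are (i) handling the infinite case separately so that one is not manipulating $+\infty$ symbolically, and (ii) invoking $\gamma_m(K) > 0$ to justify discarding the cross term $l\gamma_m(K)$ while keeping the inequality strict.
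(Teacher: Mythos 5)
Your argument is correct, but note that the paper gives no proof of this lemma at all --- it is introduced as a ``trivial'' observation --- and the reason is that there is a one-line argument you have walked around. The hypothesis $f_{l}(K)>f_{m}(K)$ forces $f_{m}(K)<+\infty$, hence $0<\gamma_{m}(K)\le 1/2$ and so $f_{m}(K)=m/(1-\gamma_{m}(K))\ge m$ simply because $1-\gamma_{m}(K)\le 1$; then $m\le f_{m}(K)<f_{l}(K)$ and you are done. Your detour through cross-multiplication to reach $1-\gamma_{l}(K)<l/m$ lands in the same place but involves $\gamma_{l}(K)$, which never needs to enter the argument, and the volumetric bound $\gamma_{m}(K)\ge m^{-1/d}$ is likewise unnecessary: the strictness of the conclusion is already supplied by the strict hypothesis, so nonnegativity of $\gamma_{m}(K)$ (indeed, merely $1-\gamma_{m}(K)\le 1$) suffices, and you do not need strict positivity to ``discard the cross term.'' Everything you wrote is valid --- the case split on $f_{l}(K)=+\infty$, the sign checks before clearing denominators, the final rearrangement --- so this is a correct but over-engineered proof of a statement whose content is just $f_{m}(K)\ge m$.
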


This shows that the covering index of any convex body can be obtained by calculating a finite minimum, rather than the infimum of an infinite set. In particular, if $f_{l}(K)<\infty$ for some $l$, then $\coin(K)=\min\left\{f_{m}(K): m< f_{l}(K)\right\}$. 

The next result summarizes what we know about the continuity of $f_m$ and $\coin$. Note that the restriction $\gamma_{m}(K)\leq 1/2$ plays a key role throughout the proof. We remark that without this constraint (or a constraint of the form $\gamma_{m}(K)\leq r$, where $0<r\leq 1/2$), the proof of Theorem \ref{continuity} would not hold. 

\begin{theorem}[Continuity]\label{continuity} Let $d$ be any positive integer.  
\item(i) For any $K,L\in {\cal{K}}^{d}_{m}$, the relations (\ref{half}) and (\ref{dim}) hold. Moreover, equality holds in (\ref{half}) if and only if $d_{BM}(K,L)=1$, i.e., $L$ is an affine image of $K$ and equality in (\ref{dim}) holds if and only if either $d_{BM}(K,L)=1$ or $d_{BM}(K,L)>1$ with 
\[\gamma_{m}(K)=\frac{\gamma_{m}(L)}{d_{BM}(K,L)}=\frac{1}{2d_{BM}(K,L)}.\] 
\item(ii) The functional $f_{m}:{\cal{K}}^{d}_{m}\longrightarrow \mathbb{R}$ is Lipschitz continuous with $\frac{d^2-1}{2\ln d}$ as a Lipschitz constant and $$\left|f_{m}(K)-f_{m}(L)\right|\le d_{BM}(K,L)-1\le \frac{d^2-1}{2\ln d}\ln \left(d_{BM}(K,L)\right),$$ for all $K, L \in {\cal{K}}^{d}_{m}$. On the other hand, $f_{m}:{\cal{K}}^{d}\longrightarrow {\mathbb{R}}\cup \{+\infty\}$ is lower semicontinuous, for all $d$ and $m$.       
\item(iii) Define $I_{K}=\{i: \gamma_{i}(K)\leq 1/2\}=\{i: K\in {\cal{K}}^{d}_{i}\}$, for any $d$-dimensional convex body $K$. If $I_{L}\subseteq I_{K}$, for some $K, L \in {\cal{K}}^{d}$, then 
\begin{equation}\label{wcoin-half}
\coin(K) \leq \frac{2d_{BM}(K,L)-1}{d_{BM}(K,L)} \coin(L) \leq d_{BM}(K,L) \coin(L). 
\end{equation}
\item(iv) The functional $\coin:{\cal{K}}^{d}\longrightarrow \mathbb{R}$ is lower semicontinuous for all $d$.  
\item(v) Define 
\[{\cal{K}}^{d*}:=\left\{K\in {\cal{K}}^{d}: \gamma_{m}(K)\neq 1/2, m\in {\mathbb{N}} \right\}.
\] 
Then the functional $\coin:{\cal{K}}^{d*}\longrightarrow \mathbb{R}$ is continuous for all $d$.
\end{theorem}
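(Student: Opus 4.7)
The proof splits into five parts, with part (i) as the analytical core; parts (ii)--(v) follow from (i) combined with Zong's continuity of $\gamma_m$, the minimization lemma (Lemma~\ref{monotonic}), and John's theorem. My plan is to push the technical work into (i) and then harvest the rest as corollaries.

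For part (i), let $\delta = d_{BM}(K,L)$. Since $\gamma_m$ and $f_m$ are affine invariants, after applying the linear map realizing the Banach--Mazur distance I may assume $L \subseteq K \subseteq \delta L$. The key covering bound $\gamma_m(K) \le \delta \gamma_m(L)$ falls out of scaling an optimal $m$-cover $L = \bigcup_{i=1}^{m}(\gamma_m(L) L + t_i)$ by $\delta$: the resulting translates of $\delta\gamma_m(L) L$ cover $\delta L \supseteq K$, and each sits inside the corresponding translate of $\delta\gamma_m(L) K$ because $L \subseteq K$. A symmetric argument (cover $K$ and use $K \subseteq \delta L$, which follows from $K \subseteq \delta L$) yields $\gamma_m(L) \le \delta\gamma_m(K)$. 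Now (\ref{half}) is equivalent to $1 - \gamma_m(L) \le \delta(1 - \gamma_m(K))$; substituting $\gamma_m(L) \ge \gamma_m(K)/\delta$ reduces it algebraically to $\gamma_m(K)(\delta + 1) \le \delta$, which holds because $\gamma_m(K) \le 1/2 \le \delta/(\delta + 1)$ for every $\delta \ge 1$. For (\ref{dim}), I would unwrap the claim as $\gamma_m(L) \le (\delta - 1 + \delta\gamma_m(K))/(2\delta - 1)$ and split into cases: if $\delta\gamma_m(K) \le 1/2$ apply $\gamma_m(L) \le \delta\gamma_m(K)$, otherwise apply $\gamma_m(L) \le 1/2$. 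Each case collapses to $\gamma_m(K) \le 1/(2\delta)$ or $\gamma_m(K) \ge 1/(2\delta)$ respectively, which is automatic in the respective case. The equality assertions come out by tracking equality through each step.

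For (ii), the bounds in (i) rearrange to $|f_m(K) - f_m(L)| \le d_{BM}(K,L) - 1$; the outer inequality $\delta - 1 \le \frac{d^2-1}{2\ln d}\ln\delta$ combines John's theorem ($d_{BM} \le d^2$ on ${\cal{K}}^d$) with the monotonicity of $(x-1)/\ln x$ on $(1,\infty)$, whose value at $x = d^2$ is the advertised Lipschitz constant. Lower semicontinuity of $f_m : {\cal{K}}^d \to \mathbb{R} \cup \{+\infty\}$ follows from continuity of $\gamma_m$: the superlevel set $\{f_m > t\}$ equals $\{\gamma_m > 1 - m/t\}$ when $t \le 2m$ and $\{\gamma_m > 1/2\}$ when $t > 2m$, and both are preimages of open sets. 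For (iii), applying (\ref{dim}) with the roles of $K$ and $L$ swapped yields $f_m(K) \le \frac{2\delta - 1}{\delta}f_m(L)$ whenever $m \in I_L$; because $I_L \subseteq I_K$, taking infima gives $\coin(K) \le \inf_{m \in I_L} f_m(K) \le \frac{2\delta - 1}{\delta}\coin(L)$, and the chain inequality $(2\delta - 1)/\delta \le \delta$ is exactly $(\delta - 1)^2 \ge 0$.

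For (iv), given $K_n \to K$, I pass to a subsequence with $\coin(K_{n_k}) \to \lambda := \liminf_n \coin(K_n)$ and use Lemma~\ref{monotonic} to write each as $f_{m_k}(K_{n_k})$; since $f_m \ge m$, the integers $m_k$ stay bounded, so I thin further to make $m_k = m^*$ constant. Continuity of $\gamma_{m^*}$ forces $\gamma_{m^*}(K) \le 1/2$, so $K \in {\cal{K}}^d_{m^*}$, and lower semicontinuity of $f_{m^*}$ from (ii) gives $\coin(K) \le f_{m^*}(K) \le \liminf_k f_{m^*}(K_{n_k}) = \lambda$. For (v), at $K \in {\cal{K}}^{d*}$ every $\gamma_m(K) \ne 1/2$, so continuity of $\gamma_m$ ensures that on some neighborhood of $K$ the sign of $\gamma_m - 1/2$ is preserved for each fixed $m$; Lemma~\ref{monotonic} then restricts the infimum defining $\coin$ to finitely many indices on such a neighborhood, and the Lipschitz bound from (ii) on each relevant $f_m$ upgrades the lower semicontinuity from (iv) to full continuity. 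The principal obstacle throughout is part (i): the covering argument itself is routine, but the delicate two-case algebra for (\ref{dim}) and the precise equality characterizations hinge on exploiting the $1/2$ threshold exactly, and this is where the substantive content of the theorem sits.
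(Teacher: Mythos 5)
Your proposal is correct and follows essentially the same route as the paper's proof: the key covering inequality $\gamma_{m}(K)\le d_{BM}(K,L)\,\gamma_{m}(L)$, the algebraic exploitation of the threshold $\gamma_{m}\le 1/2$ to obtain (\ref{half}) and (\ref{dim}) (your two-case reduction for (\ref{dim}) is an equivalent rearrangement of the paper's direct substitution), the reduction of $\coin$ to a finite minimum of the $f_{m}$ via Lemma \ref{monotonic} and the Rogers-type bound, and the preservation of the sign of $\gamma_{m}-1/2$ near points of ${\cal{K}}^{d*}$ for part (v); your subsequence argument for (iv) and the superlevel-set verification of lower semicontinuity are routine variants of the paper's. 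The only step to treat with care is the claim that (\ref{half}) ``rearranges to'' $\left|f_{m}(K)-f_{m}(L)\right|\le d_{BM}(K,L)-1$ --- what it actually yields is $\left(d_{BM}(K,L)-1\right)f_{m}(L)\le 2m\left(d_{BM}(K,L)-1\right)$ --- but this imprecision appears verbatim in the paper's own proof of (ii), so you have reproduced it rather than introduced it.
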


\begin{proof}
(i) We first show 

\begin{proposition} \label{aux}
For any $K,L\in {\cal{K}}^{d}$, 
\begin{equation}\label{gamma}
\gamma_{m}(K)\leq d_{BM}(K,L)\gamma_{m}(L)
\end{equation}
holds and so $\gamma_{m}$ is Lipschitz continuous on ${\cal{K}}^{d}$ with $\frac{d^2-1}{2\ln d}$ as a Lipschitz constant and 
$$\left|\gamma_{m}(K)-\gamma_{m}(L)\right|\le d_{BM}(K,L)-1\le \frac{d^2-1}{2\ln d}\ln \left(d_{BM}(K,L)\right),$$
for all $K, L \in {\cal{K}}^{d}$. 
\end{proposition}

\begin{proof} Let $\delta>1$ be such that $d_{BM}(K,L)<\delta$. Now let $a\in K$, $b\in L$ and the invertible linear operator $T:{\mathbb{E}}^{d}\longrightarrow {\mathbb{E}}^{d}$ satisfy $L-b\subseteq T(K-a)\subseteq \delta (L-b)$. Moreover, let $\left\{\lambda L+x_{i}: x_{i}\in {\mathbb{E}}^{d}, i=1,\ldots,m \right\}$ be a homothetic cover of $L$, having $m$ homothets with homothety ratio $\lambda>0$. Then 
\begin{align*}
T(K-a) & \subseteq \delta (L-b) \subseteq \delta \left(\bigcup_{i=1}^{m}(\lambda L+x_{i}-b) \right) = \delta \left(\bigcup_{i=1}^{m}(\lambda (L-b)+x_{i}+(\lambda -1)b) \right) \\ & \subseteq \delta \left(\bigcup_{i=1}^{m}(\lambda T(K-a)+x_{i}+(\lambda -1)b) \right) = \bigcup_{i=1}^{m}(\delta \lambda T(K-a)+\delta x_{i}+\delta (\lambda -1)b), 
\end{align*}
\noindent which implies that there is a homothetic cover of $T(K-a)$ having $m$ homothets with homothety ratio $\delta \lambda$. Hence there is a homothetic cover of $K$ having $m$ homothets with homothety ratio $\delta \lambda$. This implies that $\gamma_{m}(K)\leq \delta \gamma_{m}(L)$. Therefore, by taking $\inf \delta = d_{BM}(K,L)$, we get $\gamma_{m}(K)\leq d_{BM}(K,L)\gamma_{m}(L)$. 

On the other hand, $\gamma_{m}(K)\leq 1$, $\gamma_{m}(L)\leq 1$ and (\ref{gamma}) imply in a straightforward way that 
\[\left|\gamma_{m}(K)-\gamma_{m}(L)\right|\leq  d_{BM}(K,L)-1.
\]
If $d_{BM}(K,L)=1$, we have nothing further to prove. Otherwise, recall John's theorem (\cite{schneider1}, page 587) implying $1\le d_{BM}(K,L)\le d^2$. Thus using the concavity of $\ln (\cdot )$ one obtains $\frac{2\ln d}{d^2-1}\leq \frac{\ln \left(d_{BM}(K,L)\right)}{d_{BM}(K,L)-1}$. This completes the proof of Proposition \ref{aux}. 
\end{proof}

We now return to the main proof. To prove (\ref{half}) let $K,L\in {\cal{K}}^{d}_{m}$. If $\gamma_{m}(K)\leq \gamma_{m}(L)$, then $f_{m}(K) \leq f_{m}(L) \leq d_{BM}(K,L) f_{m}(L)$, with equality if and only if $d_{BM}(K,L)=1$. Therefore, we can assume without loss of generality that $\gamma_{m}(K)> \gamma_{m}(L)$. Note that this together with $\gamma_{m}(K)\leq 1/2$ and $\gamma_{m}(L)\leq 1/2$ implies 
\begin{equation}\label{assumption}
\gamma_{m}(K)-(\gamma_{m}(K))^{2} > \gamma_{m}(L)-(\gamma_{m}(L))^{2}. 
\end{equation}

Thus by using (\ref{gamma}), 
\[\frac{f_{m}(K)}{f_{m}(L)} = \frac{1-\gamma_{m}(L)}{1-\gamma_{m}(K)} < \frac{\gamma_{m}(K)}{\gamma_{m}(L)}\leq d_{BM}(K,L),\]
\noindent which gives (\ref{half}). In addition, equality never holds  in this case. Thus equality in (\ref{half}) holds if and only if $d_{BM}(K,L)=1$. 

Now to prove (\ref{dim}), we again use (\ref{gamma}).  
\[
f_{m}(K) = \frac{m}{1-\gamma_{m}(K)} \geq \frac{m}{1-\frac{\gamma_{m}(L)}{d_{BM}(K,L)}} = \frac{d_{BM}(K,L)(1-\gamma_{m}(L))}{d_{BM}(K,L)-\gamma_{m}(L)} f_{m}(L),  
\] 
with equality if and only if $\gamma_{m}(K)=\frac{\gamma_{m}(L)}{d_{BM}(K,L)}$. 

Since $\gamma_{m}(L)\leq 1/2$,  
\[\frac{1-\gamma_{m}(L)}{d_{BM}(K,L)-\gamma_{m}(L)} \geq \frac{1}{2d_{BM}(K,L)-1},
\]
with equality if and only if either $d_{BM}(K,L)=1$ or $d_{BM}(K,L)>1$ with $\gamma_{m}(L)=1/2$. Thus (\ref{dim}) is satisfied and equality holds if and only if either $d_{BM}(K,L)=1$ or $d_{BM}(K,L)>1$ with $\gamma_{m}(K)=\frac{\gamma_{m}(L)}{d_{BM}(K,L)}=\frac{1}{2d_{BM}(K,L)}$.  

(ii) The continuity on ${\cal{K}}^{d}_{m}$ is immediate, since $\gamma_{m}$ is continuous on ${\cal{K}}^{d}$, for all $d$ and $m$ \cite{zong1}. The Lipschitz continuity follows from (\ref{half}) in the same way as in Proposition \ref{aux}. 

For the lower semicontinuity on ${\cal{K}}^{d}$, we consider two cases. 


\vspace{2mm}

\noindent\textit{Case 1: }$f_{m}(K)=\frac{m}{1-\gamma_{m}(K)}$, with $0<\gamma_{m}(K)\leq \frac{1}{2}$. 

\vspace{2mm}

We need to show that for every $\epsilon>0$, there exists $\delta>0$, such that $f_{m}(K')\geq f_{m}(K)-\epsilon$, for all $K'$ with $1\leq d_{BM}(K,K')\leq 1+\delta$. Our proof of this claim is indirect: 

Assume that there exist $\epsilon_{0}>0$, $\delta_{1} > \delta_{2} > \cdots > \delta_{n} >\cdots > 0$ with $\lim_{n\rightarrow +\infty}{\delta_{n}} = 0$, and $K_{1}, K_{2}, \ldots, \break K_{n}, \ldots \in {\cal{K}}^{d}$ such that $f_{m}(K_{n})< f_{m}(K)-\epsilon_{0}$, where $1\leq d_{BM}(K,K_{n})\leq 1+\delta_{n}$, $n=1,2,\ldots$. Here 
\[f_{m}(K_{n})= \frac{m}{1-\gamma_{m}(K_{n})} < \frac{m}{1-\gamma_{m}(K)} -\epsilon_{0} = f_{m}(K) - \epsilon_{0}, 
\]
implying that
\begin{equation}\label{small}
\gamma_{m}(K)>1-\frac{m}{\frac{m}{1-\gamma_{m}(K)} - \epsilon_{0}} > \gamma_{m}(K_{n}) > 0.
\end{equation} 

As $\lim_{n\rightarrow +\infty}{d_{BM}(K,K_{n})}=1$ and $\gamma_{m}:{\cal{K}}^{d}\longrightarrow {\mathbb{R}}$ is continuous, therefore,  $\lim_{n\rightarrow +\infty}{\gamma_{m}(K_{n})}=\gamma_{m}(K)$, which together with (\ref{small}) implies $\gamma_{m}(K)>\gamma_{m}(K)$, a contradiction. 


\vspace{2mm}

\noindent\textit{Case 2: }$f_{m}(K)=+\infty$, with $\frac{1}{2}<\gamma_{m}(K)\leq 1$.  

\vspace{2mm}

Here we need to show that for any $K_{1}, K_{2}, \ldots, K_{n}, \ldots \in {\cal{K}}^{d}$ with $\lim_{n\rightarrow +\infty}{d_{BM}(K,K_{n})}=1$ we have that $\lim_{n\rightarrow +\infty}{f_{m}(K_{n})}=+\infty$. Again, we show this via an indirect argument. First, recall that if $f_{m}(K_{n})< +\infty$, then $m<f_{m}(K_{n})=\frac{m}{1-\gamma_{m}(K_{n})}\leq 2m$ with $0<\gamma_{m}(K)\leq \frac{1}{2}$. Second, assume that for a subsequence $K_{i_1}, K_{i_2},\ldots, K_{i_{n}},\ldots \in {\cal{K}}^{d}$ with $\lim_{n\rightarrow +\infty}{d_{BM}(K,K_{i_n})}=1$ we have 
\[\lim_{n\rightarrow +\infty}{f_{m}(K_{i_n})} = \lim_{n\rightarrow +\infty}{\frac{m}{1-\gamma_{m}(K_{i_n})}} = \frac{m}{1-\gamma_{m}(K)} \leq 2m.
\]

(Here, we have once again used the continuity of $\gamma_{m}:{\cal{K}}^{d}\longrightarrow {\mathbb{R}}$.) Thus $\gamma_{m}(K)\leq \frac{1}{2}$ implying that $f_{m}(K)<+\infty$, a contradiction. 
 
(iii) Note that $\coin(K)=\inf \{f_{m}(K): m\in I_{K}\}$. The result then follows from (\ref{half}) and (\ref{dim}) and the fact that $I_{L}\subseteq I_{K}$.

(iv) Let $K\in {\cal{K}}^{d}$ and $h=2^{d+1}\left( \binom{2d}{d}^{\frac{1}{d}}-\frac{1}{2}\right)^d  d(\ln d + \ln\ln d + 5)$. From the proof of Lemma \ref{monotonic} and Corollary \ref{rogers}, $\coin(K)=\min\left\{f_{m}(K):  m\leq h\right\}$. In fact, by referring to the volumetric arguments used in the proof of Theorem \ref{cube}, $\coin(K)=\min\left\{f_{m}(K):  2^{d}\leq m \leq h\right\}$. Thus $\coin:{\cal{K}}^{d}\longrightarrow {\mathbb{R}}$ is the pointwise minimum of finitely many lower semicontinuous functions $f_{m}:{\cal{K}}^{d}\longrightarrow {\mathbb{R}}\cup \{+\infty\}$, $2^{d}\leq m\leq h$, defined on the metric space ${\cal{K}}^{d}$. Since the minimum of finitely many lower semicontinuous functions defined on a metric space is lower semicontinuous, the result follows.    

(v) It remains to establish the upper semicontinuity. Let $(K_{n})_{n\in \mathbb{N}}$ be a sequence in ${\cal{K}}^{d*}$ converging to $K\in {\cal{K}}^{d*}$. We prove that $\limsup \coin(K_{n}) \leq \coin(K)$. It suffices to show that for sufficiently large $n\in {\mathbb{N}}$, $I_{K}\subseteq I_{K_{n}}$ as, from (iii), this would imply $\coin(K_{n})\leq d_{BM}(K,K_{n})\coin(K)$. 

Let $m\in I_{K}$, that is $\gamma_{m}(K)< 1/2$, as $K\in {\cal{K}}^{d*}$. Also note that since $K_{n}\in {\cal{K}}^{d*}$, either $\gamma_{m}(K_{n})>1/2$ or $\gamma_{m}(K_{n})<1/2$. Relation (\ref{gamma}) now gives 
\[\gamma_{m}(K_{n})\leq d_{BM}(K, K_{n})\gamma_{m}(K),
\] 
for any $n\in {\mathbb{N}}$. By choosing $n$ sufficiently large we can ensure that $\gamma_{m}(K_{n})<1/2$ and so $m\in I_{K_{n}}$. 
\end{proof} 

We observe that $B^{3}\in {\cal{K}}^{3*}$ (cf. Remark \ref{balls}), so ${\cal{K}}^{3*}$ is nonempty.   
%
%
%

The lower semicontinuity of $\coin$ leads to some interesting consequences. On the one hand, it shows that there exists a $d$-dimensional convex body $M$ such that $\coin(M) = \inf\left\{\coin(K): K\in{\cal{K}}^{d} \right\}$, for all $d$. Thus there exists a minimizer of $\coin$ over all $d$-dimensional convex bodies, for all $d$. On the other hand, although lower semicontinuity does not guarantee the existence of a $\coin$-maximizer, it does show that $\sup\left\{\coin(K): K\in{\cal{K}}^{d} \right\}=\sup\left\{\coin(P): P\in{\cal{P}}^{d} \right\}$, where ${\cal{P}}^{d}$ denotes the set of all $d$-dimensional convex polytopes, which is known to be dense in ${\cal{K}}^{d}$. Therefore, in trying to compute the supremum of $\coin$ one can restrict to the class of polytopes. This is not true for the illumination number, which is known to be upper semicontinuos (see \cite{bezdek-book}, pp. 23-24) but is not lower semicontinuous. 

We do not know whether $\coin$ is continuous on ${\cal{K}}^{d}$ or not. The argument used to prove the upper semicontinuity of $\coin$ on ${\cal{K}}^{d*}$ does not seem to work in general. We, therefore, propose the following problem.  
\begin{problem}\label{upper}
Either prove that $\coin$ is upper semicontinuous on ${\cal{K}}^{d}$ or construct a counterexample. 
\end{problem}

It would be natural to ask whether analogues of inequalities (\ref{half}) and (\ref{dim}) hold for $\coin$. The answer is negative for both. One can look at the example of a circle $B^{2}$ and a square $C^{2}$. It is well-known that $d(C^{2},B^{2})=\sqrt{2}$ and we will see in Section \ref{extreme} that $\coin(B^{2})=14$ and $\coin(C^{2})=8$. But then  $\coin(B^{2}) >\sqrt{2} \coin(C^{2})$ and $\coin(C^{2})<\frac{\sqrt{2}}{2\sqrt{2}-1}\coin(B^{2})$.

\section{Compatibility with vector sums}\label{direct-vector-sum}
For the sake of brevity, we write $N_{\lambda}(K)$ instead of $N(K,\lambda K)$, for any $d$-dimensional convex body $K$ and $0< \lambda \leq 1$. Clearly, $N_{1}(K)=1$,  
\begin{equation}\label{one}
N_{\gamma_{m}(K)}(K)\leq m
\end{equation}
and
\begin{equation}\label{two}
\gamma_{_{N_{\lambda}(K)}}(K)\leq \lambda.  
\end{equation}

\begin{figure}[t]
\centering
		\includegraphics[scale=0.7]{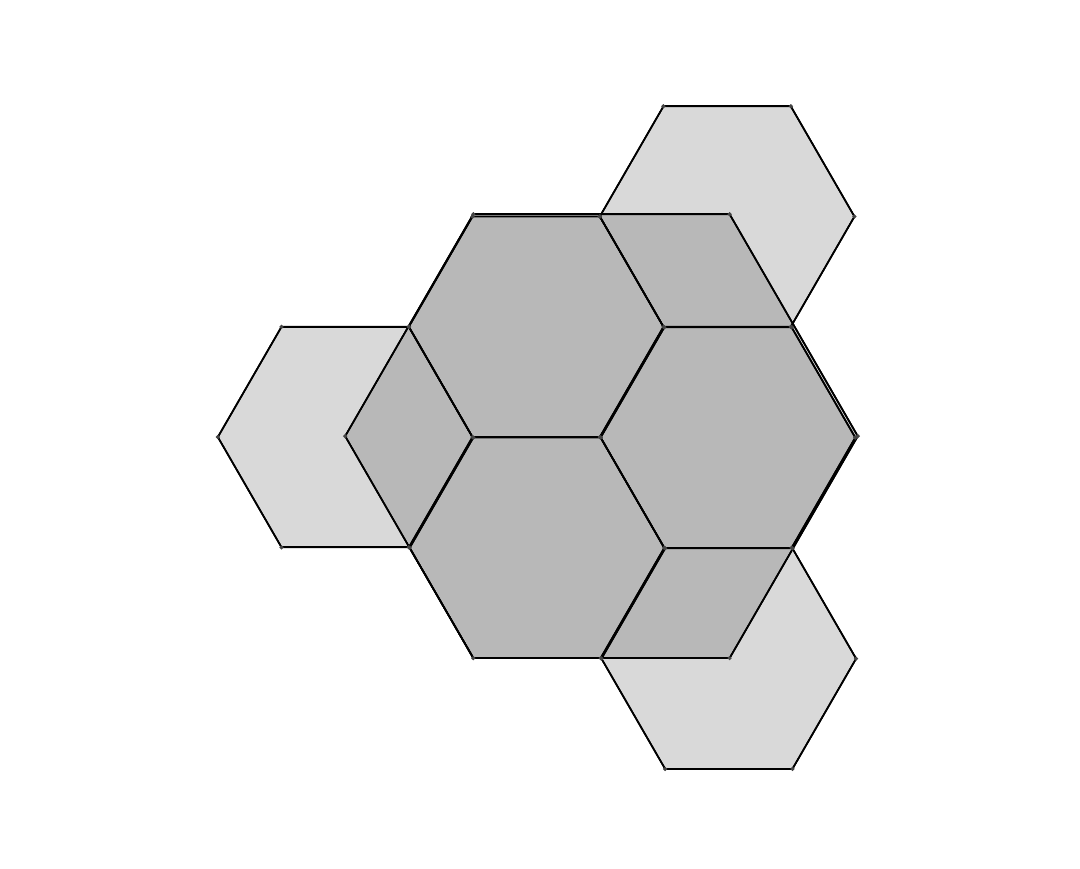}
	\caption{Covering $H$ by six homothets with homothety ratio $\frac{1}{2}$.}
	\label{fig:hexagon}
\end{figure}

Moreover, either inequality can be strict. To see that (\ref{one}) can be strict, consider the example of an affine regular convex hexagon $H$. Lassak \cite{lassak-conf} proved that $\gamma_{7}(K)=1/2$ holds for any $o$-symmetric planar convex body $K$. Thus $\gamma_{7}(H)=1/2$. On the other hand, from Figure \ref{fig:hexagon} and the monotonicity of $\gamma_{m}(K)$ in $m$ \cite{zong1} it follows that $1/2=\gamma_{7}(H)\leq \gamma_{6}(H)\leq 1/2$. Thus $\gamma_{6}(H)=1/2$ and $N_{\gamma_{_7}(H)}=N_{1/2}(H)\leq 6$. 

To see that (\ref{two}) can be strict, note that it is possible to have $N_{\lambda_{1}}(K) = N_{\lambda_{2}}(K)$, for some $\lambda_{1}<\lambda_{2}$. For instance, $N_{1/2}(C^{d})=N_{\lambda}(C^{d})=2^{d}$, for any $1/2< \lambda <1$. Therefore,  $\gamma_{N_{\lambda}(C^{d})}(C^{d})=\gamma_{2^{d}}(C^{d})=1/2<\lambda$, for any $1/2 < \lambda < 1$. We use these ideas in the remainder of this section.   

We now present some results showing that $\coin$ behaves very nicely with certain binary operations of convex bodies. The first five concern direct vector sums and will be used extensively in computing the exact values and estimates of $\coin$ for higher dimensional convex bodies from the covering indices of lower dimensional convex bodies. To state these results, we introduce the notion of tightly covered convex bodies.   

\begin{definition}\label{tight}
We say that a convex body $K\in {\cal{K}}^{d}$ is \textit{tightly covered} if for any $0<\lambda <1$, $K$ contains $N_{\lambda}(K)$ points no two of which belong to the same homothet of $K$ with homothety ratio $\lambda$. 
\end{definition}

For instance, $\ell\in {\cal{K}}^{1}$ is tightly covered since for any $0<\lambda <1$, the line segment $\ell$ contains $N_{\lambda}(\ell)=\left\lceil \lambda^{-1} \right\rceil$ points, no two of which can be covered by the same homothet of the form $\lambda \ell + t$, $t\in {\mathbb{E}}^{1}$. Later we will see that for any $d\geq 2$, the $d$-dimensional cube $C^{d}$ is also tightly covered. Furthermore, not all convex bodies are tightly covered as will be seen through the example of the circle $B^{2}$. 

\begin{theorem}\label{productnew}   
Let ${\mathbb{E}}^{d}={\mathbb{L}}_{1} \oplus \cdots \oplus {\mathbb{L}}_{n}$ be a decomposition of ${\mathbb{E}}^{d}$ into the direct vector sum of its linear subspaces ${\mathbb{L}}_{i}$ and let $K_{i}\subseteq {\mathbb{L}}_{i}$ be convex bodies such that $\coin(K_{i})=f_{m_{i}}(K_{i})$,  $i=1,\ldots, n$, and $\Gamma=\max\{ \gamma_{m_{i}}(K_{i}):1\leq i\leq n\}$. If some $n-1$ of the $K_{i}'s$ are tightly covered, then 
\begin{equation}\label{eq:product1new}
\begin{split}
\max \{\coin(K_{i}): 1\leq i\leq n\} &\leq \\
 \coin(K_{1}\oplus \cdots \oplus K_{n}) &= \inf_{\lambda \leq \frac{1}{2}} \frac{\prod_{i=1}^{n} N_{\lambda}(K_{i})}{1-\lambda} \\
& \leq \frac{\prod_{i=1}^{n} N_{\Gamma}(K_{i})}{1-\Gamma} \leq \frac{\prod_{i=1}^{n} m_{i}}{1-\Gamma} < \prod_{i=1}^{n} \coin(K_{i}), 
\end{split}
\end{equation}
where $K_{1}\oplus \cdots \oplus K_{n}$ stands for the direct sum of the convex bodies $K_{1}\subseteq {\mathbb{L}}_{1}$,\ldots, $K_{n}\subseteq {\mathbb{L}}_{n}$. Moreover, the first two upper bounds in (\ref{eq:product1new}) are tight.  
\end{theorem}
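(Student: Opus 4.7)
The plan is to reduce the entire theorem to the product formula
\[
N_\lambda(K_1\oplus\cdots\oplus K_n) \;=\; \prod_{i=1}^n N_\lambda(K_i)\qquad\text{for every }0<\lambda\le 1,
\]
because once this is available, the identity $\coin(K)=\inf_{\lambda\le 1/2}\frac{N_\lambda(K)}{1-\lambda}$ (a direct consequence of (\ref{one}), (\ref{two}) and the attainment of the infimum defining $\gamma_m$) applied to the direct sum instantly gives the middle equality in (\ref{eq:product1new}), and everything else is bookkeeping.

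The $\le$ half of the product formula is the product construction: writing $\lambda(K_1\oplus\cdots\oplus K_n)=\lambda K_1\oplus\cdots\oplus\lambda K_n$, an optimal covering $\{\lambda K_i+t_{i,j}:j=1,\dots,N_\lambda(K_i)\}$ of each $K_i$ yields the covering $\{\lambda(K_1\oplus\cdots\oplus K_n)+t_{1,j_1}+\cdots+t_{n,j_n}\}$ of the direct sum by $\prod_i N_\lambda(K_i)$ translates. The reverse inequality is the crux and the step I expect to be the main obstacle, as this is where tightness enters. Reindex so that $K_1,\dots,K_{n-1}$ are tightly covered and fix tight configurations $p_{i,1},\dots,p_{i,N_i}\in K_i$ with $N_i:=N_\lambda(K_i)$, $i\le n-1$. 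Given any covering of $K_1\oplus\cdots\oplus K_n$ by $m$ translates $H_k=(\lambda K_1+t_{k,1})\oplus\cdots\oplus(\lambda K_n+t_{k,n})$ and each multi-index $\mathbf{j}=(j_1,\dots,j_{n-1})$, I would set
\[
B_{\mathbf{j}} \;=\; \{\,k : p_{i,j_i}\in\lambda K_i+t_{k,i}\ \text{for every}\ i\le n-1\,\}.
\]
The slice $\{p_{1,j_1}+\cdots+p_{n-1,j_{n-1}}\}+K_n$ is a translated copy of $K_n$ inside the direct sum and is met only by $\{H_k\}_{k\in B_{\mathbf{j}}}$; projecting onto $\mathbb{L}_n$ shows those projections cover $K_n$, so $|B_{\mathbf{j}}|\ge N_\lambda(K_n)$. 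Tightness now makes the $B_{\mathbf{j}}$'s pairwise disjoint: if $\mathbf{j}\ne\mathbf{j}'$ differ in coordinate $i$, any common index $k$ would place two distinct tight points of $K_i$ into the single translate $\lambda K_i+t_{k,i}$. Summing, $m\ge\sum_{\mathbf{j}}|B_{\mathbf{j}}|\ge\prod_{i=1}^n N_\lambda(K_i)$, finishing the product formula.

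With the product formula in hand, the rest of (\ref{eq:product1new}) falls out. The lower bound $\max_i\coin(K_i)\le\coin(K_1\oplus\cdots\oplus K_n)$ follows from $\prod_j N_\lambda(K_j)\ge N_\lambda(K_i)$ and passage to the infimum. For the chain of upper bounds I would specialise $\lambda$ to $\Gamma$, which is legitimate because $\coin(K_i)=f_{m_i}(K_i)$ forces $\gamma_{m_i}(K_i)\le 1/2$ and hence $\Gamma\le 1/2$; the monotonicity of $N_\lambda$ in $\lambda$ combined with (\ref{one}) gives $N_\Gamma(K_i)\le N_{\gamma_{m_i}(K_i)}(K_i)\le m_i$, chaining the next two bounds. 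The final strict inequality reduces to the elementary estimate $\prod_i(1-\gamma_{m_i}(K_i))<1-\Gamma$, which holds whenever $n\ge 2$ because isolating the index realising $\Gamma$ leaves a product of factors $1-\gamma_{m_j}(K_j)$ strictly less than $1$ (each $\gamma_{m_j}(K_j)>0$). Tightness of the first two upper bounds is exhibited by the decomposition $C^n=\ell\oplus\cdots\oplus\ell$, where the chain collapses to equalities since $N_{1/2}(\ell)=2=m_i$ and $2^n/(1-1/2)=2^{n+1}=\coin(C^n)$.
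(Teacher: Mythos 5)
Your proposal is correct and follows essentially the same route as the paper: the product construction for the upper bound on $N_\lambda$ of the direct sum, the tight-point cross-section argument (your disjoint index sets $B_{\mathbf{j}}$ are just a more explicit bookkeeping of the paper's "no homothet meets two cross sections, each needs $N_\lambda(K_n)$ homothets"), the identity $\coin(K)=\inf_{\lambda\le 1/2}N_\lambda(K)/(1-\lambda)$, and the cube $C^n=\ell\oplus\cdots\oplus\ell$ for tightness. The only cosmetic deviations are that you obtain the lower bound $\max_i\coin(K_i)$ from the product formula rather than from projections, and you make explicit the (correct) observation that the final strict inequality needs $n\ge 2$.
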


\begin{proof}
First, we prove the lower bound for $\coin(K_{1}\oplus \cdots \oplus K_{n})$. Let $P_{{\mathbb{L}}_{i}}:{\mathbb{E}}^{d}\longrightarrow {\mathbb{L}}_{i}$ denote the projection of ${\mathbb{E}}^{d}$ onto ${\mathbb{L}}_{i}$ parallel to the linear subspace ${\mathbb{L}}_{1}\oplus \cdots \oplus {\mathbb{L}}_{i-1} \oplus {\mathbb{L}}_{i+1}\oplus \cdots \oplus {\mathbb{L}}_{n}$, $i=1,\ldots, n$. Let $\{\lambda K+x_{j}: x_{j}\in {\mathbb{E}}^{d}, j=1,\ldots, m\}$ be a homothetic covering of $K=K_{1}\oplus \cdots \oplus K_{n} \subseteq {\mathbb{E}}^{d}$ with homothety ratio $0<\lambda \leq 1/2$. As $\left\{ P_{{\mathbb{L}}_{i}}(\lambda K+x_{j}) = \lambda K_{i} + P_{{\mathbb{L}}_{i}}(x_{j}): x_{j}\in {\mathbb{E}}^{d}, j=1,\ldots, m \right\}$ is a homothetic covering of $K_{i}$ with homothety ratio $\lambda$ in ${\mathbb{L}}_{i}$, $1\leq i\leq n$, the lower bound follows. 

Second, we prove the formula and the upper bounds on $\coin(K_{1}\oplus \cdots \oplus K_{n})$. 

\begin{proposition}\label{numbers}  
If some $n-1$ of the $K_{i}'s$ are tightly covered, then for all $0 < \lambda < 1$, 
\begin{equation}\label{eq:numbers}
N_{\lambda}(K_{1}\oplus\cdots\oplus K_{n}) = \prod_{i=1}^{n} N_{\lambda}(K_{i}).
\end{equation}
\end{proposition}

\begin{proof} Let $N_{i}=N_{\lambda}(K_{i})$, $i=1,\ldots, n$, and let $\{ \lambda K_{i} + t_{ij_{i}} : t_{ij_{i}}\in {\mathbb{L}}_{i}, j_{i}=1, \ldots, N_{i}\}$ be a homothetic covering of $K_{i}$ with homothety ratio $\lambda$ in ${\mathbb{L}}_{i}$, for $i=1,\ldots, n$. 

Clearly, 
\begin{align*}
&\left\{ \left( \lambda K_{1}+t_{1j_{1}}\right) \oplus \cdots \oplus \left(\lambda K_{n}+ t_{nj_{n}}\right): t_{ij_{i}}\in {\mathbb{L}}_{i}, i=1,\ldots, n, j_{i}=1,\ldots, N_{i} \right\} \\
=&\left\{ \lambda \left( K_{1} \oplus \cdots \oplus K_{n} \right) + t_{1j_{1}}+\cdots+t_{nj_{n}}: i=1,\ldots, n, j_{i}=1,\ldots, N_{i} \right\}
\end{align*}
is a homothetic covering of $K_{1} \oplus \cdots \oplus K_{n}$ with homothety ratio $\lambda$ in ${\mathbb{E}}^{d}$ having cardinality $\prod_{i=1}^{n}N_{i}$. Thus $N_{\lambda}(K_{1}\oplus\cdots\oplus K_{n}) \leq \prod_{i=1}^{n} N_{\lambda}(K_{i})$.

Next, let ${\cal{C}} = \left\{ \lambda\left( K_{1}\oplus\cdots\oplus K_{n} \right) + t_{j}: t_{j}\in {\mathbb{E}}^{d}, j=1,\ldots, N \right\}$ be a minimal cardinality homothetic covering of $K_{1}\oplus\cdots\oplus K_{n}$ with homothety ratio $\lambda$ in ${\mathbb{E}}^{d}$. Let us assume without loss of generality that $K_{1},\ldots, K_{n-1}$ are tightly covered. So, for $i=1,\ldots, n-1$ and $j_{i}=1, \ldots, N_{\lambda}(K_{i}) $, there exist points $x_{ij_{i}}\in K_{i}$ such that for any fixed $i$ and $1\leq j_{i}\neq j'_{i}\leq N_{\lambda}(K_{i})$, $x_{ij_{i}}$ and $x_{ij'_{i}}$ cannot both be contained in a homothet of $K_{i}$ with homothety ratio $\lambda$. Therefore, no homothet in ${\cal{C}}$ intersects any two of the $\prod_{i=1}^{n-1}N_{\lambda}(K_{i})$ cross sections $x_{1j_{1}} + \cdots + x_{n-1j_{n-1}} + K_{n}$  of $K_{1}\oplus\cdots\oplus K_{n}$. In order to cover each such cross section, we require at least $N_{\lambda}(K_{n})$ homothets from ${\cal{C}}$. Thus $N_{\lambda}(K_{1}\oplus\cdots\oplus K_{n}) = N \geq \prod_{i=1}^{n} N_{\lambda}(K_{i})$. 
\end{proof}

Hence, for any $0<\lambda < 1$, 
\[\frac{N_{\lambda}(K_{1}\oplus\cdots\oplus K_{n})}{1-\lambda} = \frac{\prod_{i=1}^{n} N_{\lambda}(K_{i})}{1-\lambda}.
\]

Thus, 
\begin{align*}
\coin(K_{1}\oplus\cdots\oplus K_{n}) &= \inf_{m\in \mathbb{N}} \left\{\frac{m}{1-\gamma_{m}(K_{1}\oplus\cdots\oplus K_{n})}: \gamma_{m}(K_{1}\oplus\cdots\oplus K_{n})\leq \frac{1}{2}, \right\}\\
&= \inf_{\lambda\leq \frac{1}{2}}\frac{N_{\lambda}(K_{1}\oplus\cdots\oplus K_{n})}{1-\lambda}\\
&= \inf_{\lambda\leq \frac{1}{2}}  \frac{\prod_{i=1}^{n} N_{\lambda}(K_{i})}{1-\lambda},
\end{align*}
completing the proof of the equality appearing in (\ref{eq:product1new}). 

The upper bounds in (\ref{eq:product1new}) now follow from the definition of $\Gamma$ and $m_{i}$, $i=1,\ldots, n$. Moreover, the example of $d$-cubes, considered as direct vector sums of $d$ 1-dimensional line segments, shows that the first two upper bounds in (\ref{eq:product1new}) are tight (cf. Theorem \ref{cube}). 
\end{proof}

We have the following immediate corollary of Proposition \ref{numbers}, which shows that $d$-cubes are tightly covered. 

\begin{corollary}\label{tight-cube}
Let ${\mathbb{E}}^{d}={\mathbb{L}}_{1} \oplus \cdots \oplus {\mathbb{L}}_{n}$ be a decomposition of ${\mathbb{E}}^{d}$ into the direct vector sum of its linear subspaces ${\mathbb{L}}_{i}$ and let $K_{i}\subseteq {\mathbb{L}}_{i}$, $i=1,\ldots, n$, be tightly covered convex bodies. Then $K_{1}\oplus \cdots \oplus K_{n}$ is tightly covered. 
\end{corollary}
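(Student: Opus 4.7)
The plan is to combine the counting identity already proved in Proposition \ref{numbers} with the projection argument used in its proof. Fix an arbitrary ratio $0<\lambda<1$. Since each $K_i\subseteq {\mathbb{L}}_i$ is tightly covered, for every $i$ we can choose points $x_{i1},x_{i2},\ldots,x_{iN_{\lambda}(K_{i})}\in K_{i}$ with the property that no two of them lie in a single homothet of $K_i$ of ratio $\lambda$. Form the $\prod_{i=1}^{n}N_{\lambda}(K_{i})$ points
\[
y_{j_{1}\ldots j_{n}}\;=\;x_{1j_{1}}+\cdots+x_{nj_{n}}\in K_{1}\oplus\cdots\oplus K_{n},\qquad 1\le j_{i}\le N_{\lambda}(K_{i}).
\]
By Proposition \ref{numbers} (which applies since all $n$ of the $K_i$ are tightly covered, in particular some $n-1$ of them are), the number of these points is exactly $N_{\lambda}(K_{1}\oplus\cdots\oplus K_{n})$, which matches what the definition of tightly covered requires.

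It then remains to verify that no two of the $y_{j_{1}\ldots j_{n}}$ sit inside a single homothet $\lambda(K_{1}\oplus\cdots\oplus K_{n})+t$ of the direct sum. Suppose to the contrary that $y_{j_{1}\ldots j_{n}}$ and $y_{k_{1}\ldots k_{n}}$ are both in such a homothet and are distinct; then there is some index $i$ with $j_{i}\neq k_{i}$. Apply the coordinate projection $P_{{\mathbb{L}}_{i}}:{\mathbb{E}}^{d}\to {\mathbb{L}}_{i}$ used in the proof of Theorem \ref{productnew}. Since $P_{{\mathbb{L}}_{i}}$ is linear and $P_{{\mathbb{L}}_{i}}(K_{1}\oplus\cdots\oplus K_{n})=K_{i}$, this projection sends $\lambda(K_{1}\oplus\cdots\oplus K_{n})+t$ to $\lambda K_{i}+P_{{\mathbb{L}}_{i}}(t)$, and it sends $y_{j_{1}\ldots j_{n}}$ to $x_{ij_{i}}$ and $y_{k_{1}\ldots k_{n}}$ to $x_{ik_{i}}$. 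Hence $x_{ij_{i}}$ and $x_{ik_{i}}$ (two \emph{distinct} chosen points of $K_{i}$) lie in a single ratio-$\lambda$ homothet of $K_{i}$, contradicting the tight covering of $K_{i}$.

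Since $\lambda\in(0,1)$ was arbitrary, this shows $K_{1}\oplus\cdots\oplus K_{n}$ is tightly covered, which is what was to be proved. There is no real obstacle: once Proposition \ref{numbers} is in hand, the only work is the short projection argument above, which is essentially identical in spirit to the lower bound argument already carried out in the proof of that proposition. The only mild care needed is to observe that the tight-covering assumption on \emph{all} $n$ factors is used here (each factor contributes its own family of witness points), even though Proposition \ref{numbers} itself only required $n-1$ of the $K_{i}$ to be tightly covered.
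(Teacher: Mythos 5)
Your proof is correct and follows essentially the same route as the paper: the paper's own (very terse) proof simply says that letting $K_n$ also be tightly covered in the argument of Proposition \ref{numbers} produces the $\prod_{i=1}^{n}N_{\lambda}(K_{i})$ witness points, and your explicit construction of the points $y_{j_1\ldots j_n}$ together with the projection $P_{{\mathbb{L}}_i}$ is precisely the detailed version of that argument. No gaps.
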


\begin{proof}
For any $0<\lambda<1$, allowing $K_{n}$ to be tightly covered in the proof of Proposition \ref{numbers} yields $\prod_{i=1}^{n} N_{\lambda}(K_{i})=N_{\lambda}(K_{1}\oplus \cdots \oplus K_{n})$ points in the convex body $K_{1}\oplus \cdots \oplus K_{n}$, no two of which belong to the same homothet of $K_{1}\oplus \cdots \oplus K_{n}$ with homothety ratio $\lambda$. 
\end{proof}

Boltyanski and Martini \cite{boltyanski-directsum} showed that $I(K_{1}\oplus \cdots \oplus K_{n})\leq \prod_{j=1}^{n}I(K_{j})$, but that the equality does not hold in general since $I(B^{2}\oplus B^{2})=7<9=(I(B^{2}))^{2}$. Thus there exists $\lambda <1$ such that $N_{\lambda}(B^{2}\oplus B^{2})=7$, whereas $N_{\lambda}(B^{2})=3$. Hence, relation (\ref{eq:numbers}) does not hold and by Proposition \ref{numbers}, $B^2$ is not tightly covered. 

Although the inequality $N_{\lambda}(K_{1}\oplus\cdots\oplus K_{n})\leq \prod_{i=1}^{n}N_{\lambda}(K_{i})$ always holds, the example of $B^{2}\oplus B^{2}$ shows that the equality (\ref{eq:numbers}) is not satisfied in general. We have the following general result on the covering index of direct vector sums of convex bodies. 

\begin{corollary}\label{product}   
Let ${\mathbb{E}}^{d}={\mathbb{L}}_{1} \oplus \cdots \oplus {\mathbb{L}}_{n}$ be a decomposition of ${\mathbb{E}}^{d}$ into the direct vector sum of its linear subspaces ${\mathbb{L}}_{i}$ and let $K_{i}\subseteq {\mathbb{L}}_{i}$ be convex bodies such that $\coin(K_{i})=f_{m_{i}}(K_{i})$,  $i=1,\ldots, n$, and $\Gamma=\max\{ \gamma_{m_{i}}(K_{i}):1\leq i\leq n\}$. Then 
\begin{equation}\label{eq:product1}
\begin{split}
\max \{\coin(K_{i}): 1\leq i\leq n\} &\leq \\
 \coin(K_{1}\oplus \cdots \oplus K_{n}) &\leq \inf_{\lambda \leq \frac{1}{2}} \frac{\prod_{i=1}^{n} N_{\lambda}(K_{i})}{1-\lambda} \\
& \leq \frac{\prod_{i=1}^{n} N_{\Gamma}(K_{i})}{1-\Gamma} \leq \frac{\prod_{i=1}^{n} m_{i}}{1-\Gamma} < \prod_{i=1}^{n} \coin(K_{i}). 
\end{split}
\end{equation}
Moreover, the first three upper bounds in (\ref{eq:product1}) are tight.  
\end{corollary}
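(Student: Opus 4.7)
The proposal is that Corollary \ref{product} follows by taking the proof of Theorem \ref{productnew} and retaining only those parts that do not rely on tightness. The lower bound in (\ref{eq:product1}) is obtained verbatim: given any homothetic cover $\{\lambda K + x_j : j=1,\dots,m\}$ of $K=K_1\oplus\cdots\oplus K_n$ with ratio $\lambda\leq 1/2$, the projections $P_{\mathbb{L}_i}(\lambda K + x_j) = \lambda K_i + P_{\mathbb{L}_i}(x_j)$ form a homothetic cover of $K_i$ in $\mathbb{L}_i$ with the same ratio $\lambda$ and cardinality at most $m$. This yields $\gamma_m(K_i)\leq \gamma_m(K)$, hence $\coin(K_i)\leq \coin(K)$ for each $i$, which gives the maximum on the left.

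For the first upper bound, I would observe that the \emph{first} half of the proof of Proposition \ref{numbers} uses no tightness: the Cartesian product of optimal homothetic covers of the factors always yields a homothetic cover of $K_1\oplus\cdots\oplus K_n$ of cardinality $\prod_{i=1}^n N_\lambda(K_i)$, so $N_\lambda(K_1\oplus\cdots\oplus K_n)\leq \prod_{i=1}^n N_\lambda(K_i)$ holds for every $0<\lambda<1$. Set $N=N_\lambda(K_1\oplus\cdots\oplus K_n)$. By (\ref{two}), $\gamma_N(K_1\oplus\cdots\oplus K_n)\leq \lambda$, so whenever $\lambda\leq 1/2$ the integer $N$ is admissible in the infimum defining $\coin$ and
$$\coin(K_1\oplus\cdots\oplus K_n)\leq f_N(K_1\oplus\cdots\oplus K_n)=\frac{N}{1-\gamma_N(K_1\oplus\cdots\oplus K_n)}\leq \frac{N}{1-\lambda}\leq \frac{\prod_{i=1}^n N_\lambda(K_i)}{1-\lambda}.$$
Taking the infimum over $\lambda\leq 1/2$ gives the first upper bound in (\ref{eq:product1}).

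The remaining two upper bounds and the strict inequality are then purely arithmetic, exactly as in Theorem \ref{productnew}. Specializing $\lambda=\Gamma\leq 1/2$ yields the second bound. Since $\gamma_{m_i}(K_i)\leq \Gamma$, the body $K_i$ can be covered by $m_i$ homothets of ratio $\gamma_{m_i}(K_i)$, hence also by $m_i$ homothets of the larger ratio $\Gamma$, giving $N_\Gamma(K_i)\leq m_i$ and the third bound. For the final strict inequality, pick $i_0$ realizing $\Gamma=\gamma_{m_{i_0}}(K_{i_0})$; then
$$\prod_{i=1}^n(1-\gamma_{m_i}(K_i))=(1-\Gamma)\prod_{i\neq i_0}(1-\gamma_{m_i}(K_i))<1-\Gamma,$$
because each remaining factor lies strictly between $0$ and $1$ (every $\gamma_{m_i}(K_i)$ is positive, and $n\geq 2$ is implicit in the direct sum). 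Dividing $\prod m_i$ by the two sides gives $\prod m_i/(1-\Gamma)<\prod \coin(K_i)$. The tightness claim for the first three upper bounds is inherited directly from the cube example in Theorem \ref{cube}, where all inequalities collapse to equalities.

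There is no substantive obstacle here: the only conceptual point is recognizing that tightness in Proposition \ref{numbers} is used only for the reverse inequality $N_\lambda(K_1\oplus\cdots\oplus K_n)\geq \prod N_\lambda(K_i)$, which is precisely the inequality that must be dropped when tightness is not assumed, downgrading the equality in (\ref{eq:product1new}) to the inequality in (\ref{eq:product1}).
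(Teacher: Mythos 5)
Your proposal is correct and matches the paper's intended argument: the paper gives no separate proof of Corollary \ref{product}, treating it as immediate from the proof of Theorem \ref{productnew} once the equality of Proposition \ref{numbers} is replaced by the one-sided bound $N_{\lambda}(K_{1}\oplus\cdots\oplus K_{n})\leq \prod_{i=1}^{n}N_{\lambda}(K_{i})$, which is exactly what you do. Your identification of tightness as being needed only for the reverse inequality, and your handling of the lower bound via projections and of the final strict inequality via $\gamma_{m_i}(K_i)>0$, are all in line with the paper.
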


Let 
\[K\subseteq {\mathbb{E}}^{d-k}\subseteq {\mathbb{E}}^{d-k}\oplus \underbrace{{\mathbb{E}}^{1}\oplus \cdots \oplus {\mathbb{E}}^{1}}_{k}= {\mathbb{E}}^{d}
\]
be a $(d-k)$-dimensional convex body and $\ell\subseteq {\mathbb{E}}^{1}\subseteq {\mathbb{E}}^{d}$ denote a line segment that can be optimally covered (in the sense of $\coin$) by two homothets of homothety ratio $1/2$. We say that the $d$-dimensional convex body 
\[K\oplus \underbrace{\ell\oplus \cdots \oplus \ell}_{k} \subseteq {\mathbb{E}}^{d}
\] 
is a (bounded) \textit{$k$-codimensional cylinder}. We have seen that the covering index behaves nicely with direct vector sums. We now show that in case of 1-codimensional cylinders it behaves even nicer. 

\begin{corollary}\label{cylinder}
For any $1$-codimensional $d$-dimensional cylinder $K\oplus \ell$, the first two upper bounds in (\ref{eq:product1new}) become equalities and 
\[\coin(K\oplus \ell) = 4N_{1/2}(K). 
\]  
\end{corollary}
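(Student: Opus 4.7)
The plan is to apply Theorem~\ref{productnew} with $n=2$ factors, $K_1=K$ and $K_2=\ell$. Since the line segment $\ell$ is tightly covered (as observed directly after Definition~\ref{tight}), the hypothesis that $n-1=1$ of the factors is tightly covered is satisfied, so Theorem~\ref{productnew} yields the equality
$$\coin(K\oplus \ell) \;=\; \inf_{\lambda \le 1/2} \frac{N_\lambda(K)\,N_\lambda(\ell)}{1-\lambda}.$$
This is already the statement that the first upper bound in (\ref{eq:product1new}) is an equality; what remains is to compute the right-hand side explicitly and to verify that the second upper bound is an equality as well.

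To carry out the one-variable minimization I would use two elementary observations: $N_\lambda(\ell)=\lceil 1/\lambda\rceil\ge 1/\lambda$ for every $\lambda\in(0,1]$, and $N_\lambda(K)$ is non-increasing in $\lambda$, so $N_\lambda(K)\ge N_{1/2}(K)$ whenever $\lambda\le 1/2$. Combining these with the identity $\lambda(1-\lambda)=1/4-(\lambda-1/2)^2\le 1/4$ gives
$$\frac{N_\lambda(K)\,N_\lambda(\ell)}{1-\lambda} \;\ge\; \frac{N_{1/2}(K)}{\lambda(1-\lambda)} \;\ge\; 4\,N_{1/2}(K),$$
with equality in the last inequality only at $\lambda=1/2$. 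Direct substitution of $\lambda=1/2$ yields $\frac{2\,N_{1/2}(K)}{1/2}=4\,N_{1/2}(K)$, so the infimum is attained at $\lambda=1/2$ and equals $4\,N_{1/2}(K)$.

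Finally, to see that the second upper bound in (\ref{eq:product1new}) is also sharp, recall that $\coin(\ell)=f_2(\ell)=4$ is realized by $m_\ell=2$ with $\gamma_2(\ell)=1/2$, while $\coin(K)=f_{m_K}(K)$ forces $\gamma_{m_K}(K)\le 1/2$. Hence $\Gamma=\max\{\gamma_{m_K}(K),\gamma_2(\ell)\}=1/2$, and the second upper bound becomes $\frac{N_{1/2}(K)\cdot 2}{1-1/2}=4\,N_{1/2}(K)=\coin(K\oplus \ell)$. The only real obstacle in the argument is the single-variable minimization, and the identity $\lambda(1-\lambda)\le 1/4$ dispatches it cleanly, avoiding any case analysis over the jump-points of $\lceil 1/\lambda\rceil$.
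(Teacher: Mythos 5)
Your proposal is correct and follows essentially the same route as the paper: apply Theorem~\ref{productnew} using that $\ell$ is tightly covered, then show the one-variable infimum is attained at $\lambda=1/2$ via $N_\lambda(\ell)=\lceil 1/\lambda\rceil\ge 1/\lambda$, the monotonicity $N_\lambda(K)\ge N_{1/2}(K)$, and $4\lambda(1-\lambda)\le 1$ (the paper phrases this last step as a contradiction, but it is the same inequality). Your explicit check that $\Gamma=1/2$ makes the second upper bound an equality is a small, welcome addition that the paper leaves implicit.
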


\begin{proof}
First note that since $\ell$ is tightly covered, Theorem \ref{productnew} is applicable. From (\ref{eq:product1new}), 
\begin{align*}
\coin(K\oplus \ell) &= \inf_{\lambda\leq \frac{1}{2}} \frac{N_{\lambda}(K)N_{\lambda}(\ell)}{1-\lambda} = \inf_{\lambda\leq \frac{1}{2}} \frac{N_{\lambda}(K)\lceil \lambda^{-1}\rceil }{1-\lambda} \\
&\leq \frac{N_{1/2}(K)N_{1/2}(\ell)}{1-\frac{1}{2}}= 4N_{1/2}(K).
\end{align*}

Suppose for some $0 < \lambda < 1/2$, $\frac{N_{\lambda}(K)\left\lceil \lambda^{-1}\right\rceil }{1-\lambda} < 4N_{1/2}(K)$. Then 
\[\left\lceil \lambda^{-1}\right\rceil \frac{N_{\lambda}(K)}{N_{1/2}(K)}<4(1-\lambda),
\] 
which is impossible, since, for $0 < \lambda < 1/2$, $\left\lceil \lambda^{-1}\right\rceil \geq 4(1-\lambda)$ and $N_{\lambda}(K)\geq N_{1/2}(K)$. 

Thus 
\[\coin(K\oplus \ell) = 4N_{1/2}(K).
\] 
\end{proof}




In addition to direct vector sum, $\coin$ displays a compatibility with Minkowski sum (or simply vector sum) of convex bodies. We note that the upper bounds appearing here are the same as in Corollary \ref{product}. 

\begin{theorem}\label{minkowski}
Let the convex body $K$ be the vector sum of the convex bodies $K_{1}, \ldots , K_{n}$ in ${\mathbb{E}}^{d}$, i.e., let $K=K_{1} + \cdots + K_{n}$ such that $\coin(K_{i})=f_{m_{i}}(K_{i})$,  $i=1,\ldots, n$, and $\Gamma=\max\{ \gamma_{m_{i}}(K_{i}):1\leq i\leq n\}$. Then 
\begin{equation}\label{mink}
\coin(K)  \leq \inf_{\lambda \leq \frac{1}{2}} \frac{\prod_{i=1}^{n} N_{\lambda}(K_{i})}{1-\lambda} \leq \frac{\prod_{i=1}^{n} N_{\Gamma}(K_{i})}{1-\Gamma} \leq \frac{\prod_{i=1}^{n} m_{i}}{1-\Gamma} < \prod_{i=1}^{n} \coin(K_{i}). 
\end{equation} 

Moreover, equality in (\ref{mink}) does not hold in general.
\end{theorem}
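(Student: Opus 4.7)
The plan is to adapt the product-covering construction from the proof of Theorem \ref{productnew} to the Minkowski-sum setting. The argument produces only upper bounds; no matching lower bound can be expected, because Minkowski summation may collapse dimensions, which is also the reason equality generally fails.

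The first step is to prove the Minkowski analogue of Proposition \ref{numbers}: for every $\lambda\in(0,1)$,
\[
N_{\lambda}(K_{1}+\cdots+K_{n})\le\prod_{i=1}^{n}N_{\lambda}(K_{i}).
\]
Indeed, given minimal coverings $\{\lambda K_{i}+t_{ij_{i}}:j_{i}=1,\ldots,N_{\lambda}(K_{i})\}$ of each $K_{i}$, the identity $(\lambda K_{1}+t_{1j_{1}})+\cdots+(\lambda K_{n}+t_{nj_{n}})=\lambda K+(t_{1j_{1}}+\cdots+t_{nj_{n}})$ (valid because positive scalar multiplication distributes over Minkowski addition) shows that the $\prod_{i}N_{\lambda}(K_{i})$ translates so obtained cover $K=K_{1}+\cdots+K_{n}$.

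The second step converts this into a covering-index bound. Since each $\coin(K_{i})=f_{m_{i}}(K_{i})$ is finite, $\Gamma\le 1/2$. For any $\lambda\in(0,1/2]$, relation (\ref{two}) gives $\gamma_{N_{\lambda}(K)}(K)\le\lambda$, whence
\[
\coin(K)\le\frac{N_{\lambda}(K)}{1-\lambda}\le\frac{\prod_{i=1}^{n}N_{\lambda}(K_{i})}{1-\lambda}.
\]
Taking the infimum over $\lambda\le 1/2$ yields the first upper bound in (\ref{mink}); specializing $\lambda=\Gamma$ yields the second; $\gamma_{m_{i}}(K_{i})\le\Gamma$ together with attainment of $\gamma_{m_{i}}$ gives $N_{\Gamma}(K_{i})\le m_{i}$ and hence the third; and the rightmost strict inequality reduces, after cross-multiplication by $\prod m_{i}$, to $\prod_{i=1}^{n}(1-\gamma_{m_{i}}(K_{i}))<1-\Gamma$, which holds because, writing $\Gamma=\gamma_{m_{k}}(K_{k})$, the factor for $i=k$ equals $1-\Gamma$ and every other factor lies strictly in $(0,1)$.

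For the final \textit{moreover} clause, I would take $K_{1}=K_{2}=\ell\subseteq\mathbb{E}^{1}$. Then $K_{1}+K_{2}=2\ell$ is again a line segment, so $\coin(K_{1}+K_{2})=\coin(\ell)=4$, whereas a routine calculation shows that $\inf_{\lambda\le 1/2}N_{\lambda}(\ell)^{2}/(1-\lambda)=8$, attained at $\lambda=1/2$ with $N_{1/2}(\ell)=2$. The only real conceptual obstacle is recognizing that the cross-section argument used in Proposition \ref{numbers} has no Minkowski-sum counterpart, and that this is exactly what makes (\ref{mink}) one-sided.
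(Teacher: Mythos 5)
Your argument for the chain of upper bounds is correct and is essentially the paper's: the paper likewise builds a covering of $K_{1}+\cdots+K_{n}$ by taking all Minkowski sums of homothets from coverings of the summands (using $\lambda K_{1}+\cdots+\lambda K_{n}=\lambda K$) and then runs the same bookkeeping as in Theorem \ref{productnew} and Corollary \ref{product}; you have merely written out the steps the paper leaves implicit, and your verification of the final strict inequality (reducing to $\prod_{i}(1-\gamma_{m_{i}}(K_{i}))<1-\Gamma$ for $n\geq 2$) is sound. Where you genuinely diverge is the \emph{moreover} clause. Your example $\ell+\ell$ is valid --- $\coin(\ell+\ell)=\coin(\ell)=4$ by affine invariance while the product bound gives $8$ --- but it is degenerate: the failure of equality there only reflects that $\coin$ is affine-invariant whereas the product bound doubles when a body is summed with itself. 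The paper instead uses $H=\Delta^{2}+(-\Delta^{2})$, computing $\coin(\Delta^{2})=12$ (via Belousov's $\gamma_{6}(\Delta^{2})=1/2$, Fudali's values of $\gamma_{m}(\Delta^{2})$ for $7\leq m\leq 15$, and the minimization lemma) and exhibiting a covering of $H$ by six half-sized homothets, so that $\coin(H)\leq 12=\coin(\Delta^{2})\ll 144=\coin(\Delta^{2})\cdot\coin(-\Delta^{2})$. That example carries more information --- it shows the vector sum of two genuinely distinct full-dimensional bodies can have covering index as small as the maximum of the summands' indices, which is what motivates Problem \ref{lowerbounds} --- but for the literal claim that equality in (\ref{mink}) can fail, your example suffices.
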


\begin{proof}

Given homothetic coverings of $K_{i}$, $i=1,\ldots, n$, with homothety ratio $0< \lambda \leq 1/2$, one can construct a homothetic covering of $K=K_{1}+\cdots+K_{n}$ with the same homothety ratio $\lambda$ in a natural way. The proof of the upper bounds follows on the same lines as in Theorem \ref{productnew} and Corollary \ref{product}. 

Furthermore, to show that equality in (\ref{mink}) does not hold in general, we consider the example of an affine regular convex hexagon $H=\Delta^{2}+(-\Delta^{2})$ and the corresponding triangle $\Delta^{2}$ .  

Belousov \cite{belousov1} showed that $\gamma_{6}(\Delta^{2})=1/2$ and $\gamma_{m}(\Delta^{2})>1/2$, for $1\leq m<6$. By Lemma \ref{monotonic}, $\coin(\Delta^{2})=\inf\{f_{m}(\Delta^{2}): 6\leq m<12\}\leq f_{6}(\Delta^{2})=12$. But Fudali \cite{fudali1} determined $\gamma_{m}(\Delta^{2})$, for $7\leq m\leq 15$, and routine calculations show that the corresponding $f_{m}'s$ satisfy $f_{m}(\Delta^{2})>12$. Thus $\coin(\Delta^{2})=12$. Now, Figure \ref{fig:hexagon} shows that $H$ can be covered by 6 half-sized homothets. Thus $\coin(H)\leq 12=\coin(\Delta^{2})$. 
\end{proof}

It is, in fact, easy to show that $\coin(H)=12$. First, observe that any translate of $\frac{1}{2}H$ can cover at the most one-sixth of the boundary of $H$. Therefore, $\gamma_{m}(H)>1/2$, for $m=1,\ldots, 5$. Thus, as in the case of $\Delta^{2}$, $\coin(H)=\inf\{f_{m}(H): 6\leq m<12\}\leq 12$. If $f_{m}(H)<12$, for some $7\leq m\leq 11$, then by definition of $f_{m}(\cdot)$, $\gamma_{m}(H)<\frac{12-m}{12}$, and by the definition of covering, $m\gamma_{m}(H)^{2}\vol(H)\geq \vol(H)$. Therefore, $m\left(\frac{12-m}{12}\right)^{2}> 1$, which is impossible for $8\leq m\leq 11$. This only leaves the case $m=7$, but it is known \cite{lassak-conf} that (cf. the remarks immediately following (\ref{two})) $\gamma_{7}(H)=1/2$ and as a result, $f_{7}(H)=14$. We conclude that $\coin(H)=12$. This kind of `volumetric' argument will remain useful throughout the next section in determining covering index values for convex bodies. Also Lemma \ref{monotonic} plays an important role, reducing the problem to finding the minimum of a finite set.

We now present an application of Theorem \ref{minkowski} to the difference body $K-K = K+(-K)$ of a convex body $K$. The result is quite useful for non-symmetric convex bodies. Once again, from the example of an affine regular convex hexagon and a triangle we note that equality does not hold in general.    
\begin{corollary}\label{difference}
If $K$ is any $d$-dimensional convex body, such that $\coin(K)=f_{m}(K)$. Then 
\begin{equation}\label{diff}
\coin(K-K) \leq \frac{\left(N_{\gamma_{m}(K)}(K)\right)^{2}}{1- \gamma_{m}(K)} \leq \frac{ m^{2}}{1- \gamma_{m}(K)} < (\coin(K))^{2}. 
\end{equation} 

Moreover, equality in (\ref{diff}) does not hold in general. 
\end{corollary}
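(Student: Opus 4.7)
The plan is to specialize Theorem \ref{minkowski} to the two-term Minkowski decomposition $K-K = K + (-K)$. The key observation is that $-K$ is the image of $K$ under the invertible linear map $x\mapsto -x$, so $-K$ is affinely equivalent to $K$. Since $\coin(\cdot)$, $\gamma_m(\cdot)$, and $N_\lambda(\cdot)$ are all affine invariants, we immediately obtain $\coin(-K)=\coin(K)=f_m(K)$, $\gamma_m(-K)=\gamma_m(K)$, and $N_\lambda(-K)=N_\lambda(K)$ for every $\lambda \in (0,1)$. In particular, the hypotheses of Theorem \ref{minkowski} are met with $n=2$, $K_1=K$, $K_2=-K$, $m_1=m_2=m$, and $\Gamma=\max\{\gamma_m(K),\gamma_m(-K)\}=\gamma_m(K)$.

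Plugging these specializations into the chain (\ref{mink}) collapses every product into a square: $\prod_{i=1}^{2} N_{\Gamma}(K_i) = \bigl(N_{\gamma_m(K)}(K)\bigr)^2$, $\prod_{i=1}^{2} m_i = m^2$, $\prod_{i=1}^{2}\coin(K_i) = (\coin(K))^2$, while the denominator $1-\Gamma$ becomes $1-\gamma_m(K)$. This produces exactly the chain (\ref{diff}). The intermediate step $\bigl(N_{\gamma_m(K)}(K)\bigr)^2 \leq m^2$ may be checked separately using (\ref{one}), and the final strict inequality $\frac{m^2}{1-\gamma_m(K)} < (\coin(K))^2$ is inherited directly from the corresponding strict inequality in (\ref{mink}) (which in turn holds because each factor $1-\gamma_m(K)<1$).

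For the strictness claim, I would reuse the example already employed in the proof of Theorem \ref{minkowski}: take $K = \Delta^2$, so that $K - K$ is (an affine image of) the regular hexagon $H$. From that proof, $\coin(\Delta^2)=12=f_6(\Delta^2)$ with $\gamma_6(\Delta^2)=1/2$, so by (\ref{one}) we have $N_{\gamma_6(\Delta^2)}(\Delta^2)\leq 6$, and the right-hand side of (\ref{diff}) is at most $\frac{6^2}{1-1/2} = 72$. Since $\coin(H)=12 < 72$, the inequality (\ref{diff}) is strict in this instance, establishing the ``equality does not hold'' assertion.

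The only real obstacle is bookkeeping: one must carefully verify the affine-invariance assertions for $-K$, confirm that the minimizer $m$ of $f_m(K)$ also minimizes $f_m(-K)$, and track how $\Gamma$ and the $m_i$'s in Theorem \ref{minkowski} specialize when $K_2 = -K_1$. Once that is in place, no further work is needed — the corollary follows directly from Theorem \ref{minkowski} and the hexagon--triangle example already in hand.
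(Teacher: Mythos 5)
Your proposal is correct and matches the paper's intent exactly: the corollary is stated as a direct specialization of Theorem \ref{minkowski} to $K-K=K+(-K)$, using affine invariance of $\coin$, $\gamma_m$ and $N_\lambda$ under $x\mapsto -x$, with the triangle--hexagon pair ($\coin(\Delta^2)=\coin(H)=12$) witnessing that equality fails. No gaps; your bookkeeping of $\Gamma=\gamma_m(K)$ and the collapse of the products into squares is precisely what the paper leaves implicit.
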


Since the upper bounds given in relations (\ref{mink}) and (\ref{diff}) match the upper bounds in (\ref{eq:product1}), it is natural to ask if the same is true for the lower bounds. However, the arguments used in the proof of Theorem \ref{productnew} and Theorem \ref{minkowski} do not seem to settle this question. 

\begin{problem}\label{lowerbounds}
Let $K_{1}, \ldots , K_{n}$ be $d$-dimensional convex bodies, for some $d\geq 2$. Then prove (disprove) that   
\begin{equation}\label{mink2}
\max\{\coin(K_{i}): i=1,\ldots, n\} \leq \coin(K_{1} + \cdots + K_{n}).  
\end{equation} 

If this does not hold, one can try proving the following weaker lower bound. 
\begin{equation}\label{mink3}
\min\{\coin(K_{i}): i=1,\ldots, n\} \leq \coin(K_{1} + \cdots + K_{n}).  
\end{equation} 
\end{problem}

\vspace{2mm}

The example of a triangle and a hexagon considered above indicates that either lower bound, if it holds, would be tight. The conjectured relations (\ref{mink2}) and (\ref{mink3}) both lead to interesting consequences, which we discuss below. 

If the weaker result (\ref{mink3}) is satisfied, combining it with Corollary \ref{difference} would give $\coin(K)\leq \coin(K-K)$. This would show that for any convex body $K$, the $o$-symmetric convex body $K-K$ has a covering index at least as large as $\coin(K)$. This, in turn, would imply that in computing the supremum of $\coin(K)$ over all $d$-dimensional convex bodies one could restrict to the class of $o$-symmetric convex polytopes. 

If the stronger result (\ref{mink2}) holds, we would be able to say even more. It is known that any nonempty intersection of translates of $B^{d}$ is a Minkowski summand of $B^{d}$ (see \cite{schneider1}, Theorem 3.2.5). This includes the class of all $d$-dimensional ball-polyhedra \cite{bezdek-ball-polyhedra}, which are nonempty intersections of finitely many translates of $B^{d}$. Result (\ref{mink2}) would imply that $\coin(B^{d})$ upper bounds the covering indices of ball-polyhedra, or more generally of nonempty intersections of translates of $B^{d}$.

\section{Extremal bodies}\label{extreme}
The aim of this section is to characterize the convex bodies that maximize or minimize the covering index among all $d$-dimensional convex bodies. In addition, we compute exact values and estimates of the covering index for a number of convex bodies. 

Since $\coin$ is a lower semicontinuous functional defined on the compact space ${\cal{K}}^{d}$, it is guaranteed to achieve its infimum over ${\cal{K}}^{d}$, that is, there exists $M\in {\cal{K}}^{d}$ such that $\coin(M)=\inf \left\{\right. \coin(K): K\in {\cal{K}}^{d}\left. \right\}$. We have the following assertion about the minimizers of $\coin$.  
\begin{theorem}\label{cube}
Let $d$ be any positive integer and $K\in {\cal{K}}^{d}$. Then $\coin(C^{d})=2^{d+1}\leq \coin(K)$ and thus (affine) $d$-cubes minimize the covering index in all dimensions. 
\end{theorem}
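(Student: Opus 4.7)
The plan is to establish both the exact value $\coin(C^d)=2^{d+1}$ and the fact that this is a universal lower bound, using nothing more sophisticated than a volumetric estimate on $\gamma_m(K)$ and a simple monotonicity check. Lemma \ref{monotonic} (the minimization lemma) reduces the computation of $\coin$ to minimizing $f_m$ over a finite range of $m$, so the job is to show that in this range the cube already achieves the smallest possible value of $f_m$.

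\textbf{Step 1 (upper bound for the cube).} First I would exhibit the obvious covering of $C^d$: bisect each edge to obtain $2^d$ translates of $\tfrac12 C^d$ covering $C^d$. This gives $\gamma_{2^d}(C^d)\le 1/2$ and hence
\[
\coin(C^d)\le f_{2^d}(C^d)=\frac{2^d}{1-\tfrac12}=2^{d+1}.
\]

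\textbf{Step 2 (volumetric lower bound on $\gamma_m$).} For an arbitrary $K\in\mathcal{K}^d$, if $\{\lambda K+t_i\}_{i=1}^{m}$ covers $K$ then comparing volumes gives $m\lambda^{d}\vol(K)\ge \vol(K)$, so $\lambda\ge m^{-1/d}$. Taking the infimum over coverings yields the key estimate
\[
\gamma_m(K)\ \ge\ m^{-1/d}\qquad\text{for all }K\in\mathcal{K}^d.
\]
In particular, the constraint $\gamma_m(K)\le 1/2$ forces $m\ge 2^d$, which is exactly the statement alluded to in the discussion after (\ref{subspace}).

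\textbf{Step 3 (the key monotonicity).} Combining the volumetric bound with the definition of $f_m$, for every $m\ge 2^d$ and every $K\in\mathcal{K}^d$,
\[
f_m(K)\ =\ \frac{m}{1-\gamma_m(K)}\ \ge\ \frac{m}{1-m^{-1/d}}.
\]
I would then show that the right-hand side is a non-decreasing function of $m$ on $\{m\ge 2^d\}$. Setting $x=m^{1/d}\ge 2$, it equals $x^{d+1}/(x-1)$, whose derivative is $x^d(dx-d-1)/(x-1)^2$, non-negative once $x\ge (d+1)/d$, and in particular on $x\ge 2$. Evaluating at $x=2$ (i.e.\ $m=2^d$) gives the value $2^{d+1}$. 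Hence
\[
f_m(K)\ \ge\ 2^{d+1}\qquad\text{for all admissible }m\ge 2^d,
\]
and therefore $\coin(K)\ge 2^{d+1}$ for every $K\in\mathcal{K}^d$.

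\textbf{Step 4 (putting it together).} Specializing Step 3 to $K=C^d$ shows $\coin(C^d)\ge 2^{d+1}$, which together with Step 1 gives $\coin(C^d)=2^{d+1}$. The same inequality applied to an arbitrary $K$ yields $\coin(C^d)\le\coin(K)$, so cubes minimize the covering index in every dimension. The only delicate point is the monotonicity in Step 3, but as computed, it follows from a one-variable derivative calculation using $x=m^{1/d}\ge 2$; every other ingredient is an immediate volumetric or definitional observation.
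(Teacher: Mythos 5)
Your proposal is correct and follows essentially the same route as the paper: the covering by $2^d$ half-sized cubes for the upper bound, and a volumetric comparison $m\lambda^d\vol(K)\ge\vol(K)$ for the lower bound. The only difference is cosmetic — you package the volume estimate as the explicit bound $\gamma_m(K)\ge m^{-1/d}$ together with the monotonicity of $x^{d+1}/(x-1)$ for $x=m^{1/d}\ge 2$, whereas the paper runs the same computation as a contradiction with $m=2^d+p$; both yield $f_m(K)\ge 2^{d+1}$ for every admissible $m$.
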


\begin{proof}
Clearly, $C^{d}$ can be covered by $2^d$ homothets of homothety ratio $1/2$, and cannot be covered by fewer homothets. Therefore, $\coin(C^{d})\leq f_{2^{d}}(C^{d})=2^{d+1}$.  Let $p$ be a positive integer. If there exists a homothetic covering of $C^{d}$ by $m=2^{d}+p$ homothets giving $f_{m}(C^{d})<2^{d+1}$, then  
\[\gamma_{m}(C^{d})<\frac{1}{2}-\frac{p}{2^{d+1}}. 
\] 
However, 
\[
m \vol(\gamma_{m}(C^{d})C^{d}) = m \gamma_{m}(C^{d})^{d} \vol(C^{d}) < (2^{d}+p) \left[\frac{1}{2}-\frac{p}{2^{d+1}}\right]^{d} \vol(C^{d}) < \vol(C^{d}), 
\]
\noindent a contradiction, showing that $\coin(C^{d})=2^{d+1}$. 

Now consider an arbitrary $d$-dimensional convex body $K$. By repeating the above calculations for $K$ we see that for $m> 2^{d}$, $f_{m}(K)$ cannot be smaller than $2^{d+1}$. A similar volumetric argument shows that $K$ cannot be covered by $2^{d}$ homothets having homothety ratio less than $1/2$. Likewise, it is impossible to cover $K$ by fewer than $2^{d}$ homothets if the homothety ratio does not exceed $1/2$. Thus $\coin(K)\geq 2^{d+1}$. 
\end{proof} 

It is known that $C(C^{d})=2^{d+1}$ \cite{swanepoel1}. Thus $\coin(C^{d})=C(C^{d})$. Do affine $d$-cubes also minimize the covering parameter? The answer is negative in general and open for $d=2,3$. An affine regular $d$-simplex $\Delta^{d}$ can be covered by $d+1$ homothetic copies each with homothety ratio $d/(d+1)$. Thus $C(\Delta^{d})\leq (d+1)^{2}$, which is less than $C(C^{d})$ for $d>3$. The question which convex bodies minimize (or maximize) the covering parameter is wide open, even in the plane. Restricting the homothety ratio to not exceed half plays a crucial role in determining the optimizers of the covering index. 

The case of $\coin$-maximizers is more involved. Indeed, since we have not established the upper semicontinuity of $\coin$, it may be the case that for some $d$, $\sup \left\{\coin(K): K\in {\cal{K}}^{d}\right\}$ is not achieved by any $d$-dimensional convex body. However, this is not the case for $d=2$. 
\begin{theorem}\label{circle}
If $K$ is a planar convex body then $\coin(K)\leq \coin(B^{2})=14$.
\end{theorem}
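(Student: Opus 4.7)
The plan is to establish the two parts separately: first the universal bound $\coin(K)\leq 14$ for every $K\in{\cal{K}}^2$ (which in particular gives $\coin(B^2)\leq 14$), and then the matching lower bound $\coin(B^2)\geq 14$.

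For the upper bound I would invoke the (by now classical) theorem of Lassak asserting that every planar convex body can be covered by seven of its homothets with ratio $1/2$, in other words $\gamma_7(K)\leq 1/2$ for all $K\in{\cal{K}}^2$. Substituting into the definition of $f_7$ yields $f_7(K)=7/(1-\gamma_7(K))\leq 14$, and hence $\coin(K)\leq f_7(K)\leq 14$. Specializing to $K=B^2$, and using the well-known fact that six disks of radius $1/2$ cannot cover the unit disk, we actually have $\gamma_7(B^2)=1/2$ and so $f_7(B^2)=14$.

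For the lower bound $\coin(B^2)\geq 14$ I would apply the Minimization Lemma (Lemma \ref{monotonic}) to replace the infimum defining $\coin(B^2)$ by a finite minimum: it suffices to verify $f_m(B^2)\geq 14$ for every $m\leq 13$. For $m\leq 6$ a boundary-arc argument gives $\gamma_m(B^2)>1/2$ (a disk of radius $r\leq 1/2$ meets the unit circle in an arc of angular extent at most $2\arcsin(r)\leq \pi/3$, and six such arcs can cover the full boundary only in a degenerate tangential configuration that leaves the centre of $B^2$ uncovered), so $f_m(B^2)=+\infty$. The case $m=7$ has been handled above. For $10\leq m\leq 13$ the volumetric inequality $\gamma_m(B^2)\geq 1/\sqrt{m}$ gives $f_m(B^2)\geq m^{3/2}/(\sqrt{m}-1)$, which a direct numerical check shows exceeds $14$ for each of these four values. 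In the delicate range $m\in\{8,9\}$ the required inequalities $\gamma_8(B^2)\geq 3/7$ and $\gamma_9(B^2)\geq 5/14$ I would obtain from the known sharp circle-covering values $\gamma_8(B^2)\approx 0.4450$ and $\gamma_9(B^2)\approx 0.4142$ available in the disk-covering literature. Combining all cases gives $\min\{f_m(B^2): m\leq 13\}=f_7(B^2)=14$, hence $\coin(B^2)=14$.

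The main obstacle is the range $m=8,9$, where the soft arguments (area and perimeter) are just barely insufficient: the perimeter bound alone gives only $\gamma_m(B^2)\geq \sin(\pi/m)$, which falls short of $3/7$ and $5/14$ respectively. Without citing the precise circle-covering values, one could instead try to strengthen the perimeter argument by double counting (each small disk must simultaneously cover a fixed angular span on the boundary and a fixed amount of area inside) or by analyzing specific extremal configurations. A secondary concern is to ensure that Lassak's seven-homothet covering is stated for general planar convex bodies, not only for the $o$-symmetric ones mentioned in the excerpt; if the reference in the literature treats only the symmetric case, one would need either to cite its non-symmetric extension or to handle the non-symmetric case separately, e.g., by passing to the centrally symmetrized body $(K-K)/2$ together with a careful Banach-Mazur estimate.
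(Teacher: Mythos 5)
Your proposal follows essentially the same route as the paper's proof: the universal bound $\coin(K)\le 14$ comes from the classical seven-homothet, half-ratio covering of planar convex bodies (the paper cites Levi rather than Lassak; Levi's theorem holds for arbitrary, not only $o$-symmetric, planar bodies, so the worry in your last paragraph is moot), and the equality $\coin(B^{2})=14$ is obtained exactly as in the paper via the Minimization Lemma, the sharp circle-covering values of G.~Fejes T\'oth for $m=8,9$, and the area argument for $10\le m\le 13$. One small repair: the fact that six half-disks cannot cover $B^{2}$ gives $\gamma_{6}(B^{2})>1/2$, not the needed inequality $\gamma_{7}(B^{2})\ge 1/2$; for the latter, observe that a disk of radius $r<1/2$ containing the centre of $B^{2}$ cannot reach the unit circle, so at most six of the seven disks meet the boundary, and your own arc estimate ($2\arcsin r<\pi/3$) then shows they cannot cover it.
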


\begin{proof} First, we show that $\coin(B^{2})=14$. It is rather trivial that $\gamma_{1}(B^{2})=\gamma_{2}(B^{2})=1$, $\gamma_{3}(B^{2})=\sqrt{3}/2 = 0.866\ldots$, and $\gamma_{4}(B^{2})=1/ \sqrt{2} = 0.707 \ldots$. Hence, $f_{1}(B^{2})=f_{2}(B^{2})=f_{3}(B^{2})=f_{4}(B^{2})=+\infty$. Moreover, the first named author \cite{bezdek2} showed that $\gamma_{5}(B^{2}) = 0.609 \ldots$ and $\gamma_{6}(B^{2}) = 0.555 \ldots$, implying that $f_{5}(B^{2})=f_{6}(B^{2})=+\infty$. On the other hand, it is easy to see that $\gamma_{7}(B^{2})=1/2$ and therefore $f_{7}(B^{2})=14$. Hence Lemma \ref{monotonic} implies that $\coin(B^{2}) = \min \left\{f_{m}(B^{2}): 7 \leq m < 14\right\}$. 

Next, recall G. Fejes T\'{o}th's result \cite{fejestoth1} according to which $\gamma_{8}(B^{2})=0.445\ldots$ and $\gamma_{9}(B^{2})=1/(1+\sqrt{2}) = 0.414\ldots$. This implies $f_{8}(B^{2})=14.420\ldots >14$ and $f_{9}(B^{2})=15.363\ldots >14$. 

We claim that $f_{m}(B^{2})>14$, for all $10\leq m < 14$. Suppose for some $10 \leq m< 14$, $f_{m}(B^{2})\leq 14$. In this case, we must have $\gamma_{m}(B^{2}) \leq \frac{14-m}{14}$ and $m \vol(\gamma_{m}(B^{2})B^{2}) > \vol(B^{2})$. This implies $m \left(\frac{14-m}{14}\right)^{2} > 1$. But, routine calculations show that the latter inequality fails to hold for all $10\leq m\leq 13$. Thus $\coin(B^{2})=14$.   

Levi \cite{levi2} showed that any planar convex body $K$ can be covered by $7$ homothets of homothety ratio $1/2$. Thus $\coin(K)\leq 14$, proving that circle maximizes the covering index in the plane. 
\end{proof}

Although the question of maximizers is open in general, we can use Corollary \ref{cylinder} and Theorem \ref{circle} to determine the maximizer among $1$-codimensional cylinders in ${\cal{K}}^{3}$. In addition, we determine the covering indices of several $1$-codimensional cylinders.  

\begin{corollary}
We have the following: 
\item(i) $\coin(\Delta^{2}\oplus \ell)= 24$. 
\item(ii) $\coin(H \oplus \ell) = 24$.  
\item(iii) $\coin(B^{2}\oplus \ell) = 28$. 
\item(iv) If $K\oplus \ell$ is a $1$-codimensional cylinder in ${\cal{K}}^{3}$, then $\coin(K\oplus \ell)\leq 28$, that is $B^{2}\oplus \ell$ maximizes $\coin$ among 3-dimensional $1$-codimensional cylinders. 
\end{corollary}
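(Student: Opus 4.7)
The plan is to apply Corollary \ref{cylinder} in each case, which reduces the computation of $\coin(K\oplus \ell)$ to that of the planar covering number $N_{1/2}(K)$. So the whole argument boils down to identifying $N_{1/2}(K)$ for $K\in\{\Delta^2,H,B^2\}$, and bounding it from above for general planar $K$.

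For part (i), I would invoke Belousov's result cited in the proof of Theorem \ref{minkowski}: $\gamma_m(\Delta^2)>1/2$ for $1\le m<6$ and $\gamma_6(\Delta^2)=1/2$. This is exactly the statement $N_{1/2}(\Delta^2)=6$, so Corollary \ref{cylinder} gives $\coin(\Delta^2\oplus \ell)=4\cdot 6=24$. For part (ii), the paragraph immediately following the proof of Theorem \ref{minkowski} already establishes that $\gamma_m(H)>1/2$ for $m\le 5$ (since each translate of $\tfrac{1}{2}H$ covers at most one-sixth of $\partial H$), while Figure \ref{fig:hexagon} exhibits $\gamma_6(H)\le 1/2$. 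Hence $N_{1/2}(H)=6$ and Corollary \ref{cylinder} yields $\coin(H\oplus \ell)=24$.

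For part (iii), I would quote the values of $\gamma_m(B^2)$ already assembled in the proof of Theorem \ref{circle}: $\gamma_6(B^2)=0.555\ldots>1/2$ while $\gamma_7(B^2)=1/2$. Therefore $N_{1/2}(B^2)=7$, and Corollary \ref{cylinder} gives $\coin(B^2\oplus \ell)=4\cdot 7=28$. Part (iv) is then the Levi-type upper bound: Levi's theorem (quoted in the proof of Theorem \ref{circle}) states that every planar convex body admits a cover by $7$ homothets of ratio $1/2$, i.e.\ $N_{1/2}(K)\le 7$ for every $K\in\mathcal{K}^2$. One more application of Corollary \ref{cylinder} gives $\coin(K\oplus \ell)=4N_{1/2}(K)\le 28$, so $B^2\oplus \ell$ is an (in fact the) maximizer of $\coin$ among $1$-codimensional cylinders in $\mathcal{K}^3$.

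There is essentially no obstacle here: every ingredient — the formula $\coin(K\oplus \ell)=4N_{1/2}(K)$, the exact $m$-covering numbers of $\Delta^2$, $H$, and $B^2$ in the relevant range of $m$, and the universal Levi bound $N_{1/2}(K)\le 7$ — has already been recorded in the paper. The only mild care needed is in part (ii), where I should make explicit that the two conditions $\gamma_5(H)>1/2$ and $\gamma_6(H)\le 1/2$ together pin down $N_{1/2}(H)=6$ (and not, say, $7$), because $N_{1/2}(K)$ is by definition the least $m$ with $\gamma_m(K)\le 1/2$.
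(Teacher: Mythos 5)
Your proposal is correct and follows the same route as the paper: both reduce everything to Corollary \ref{cylinder} together with the values $N_{1/2}(\Delta^2)=N_{1/2}(H)=6$, $N_{1/2}(B^2)=7$ established earlier, and Levi's bound $N_{1/2}(K)\le 7$ for part (iv). Your extra care in pinning down $N_{1/2}(H)=6$ from $\gamma_5(H)>1/2$ and $\gamma_6(H)\le 1/2$ is a welcome explicit detail, but not a different argument.
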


\begin{proof}
The assertions (i)-(iii) follow immediately from Corollary \ref{cylinder} and the values of $\coin(\Delta^{2})$, $\coin(H)$ and $\coin(B^{2})$ determined earlier. For (iv), recall that \cite{levi2} for a planar convex body $K$, $\max N_{1/2}(K)=7$. 
\end{proof}

We remark that the process can be continued in higher dimensions to obtain exact values or estimates of the covering index of convex bodies that are vector sums or direct vector sums of lower dimensional convex bodies. 

\begin{table}[ht]\label{table:values}
\centering
\begin{tabular}{llll}
\hline\noalign{\smallskip}
$K$ & $m$ & $\gamma_{m}(K)$ & $\coin(K)$ \\
\noalign{\smallskip}\hline\noalign{\smallskip}
$\ell$ & $2$ & $1/2$ & $4$ \\
$H$ & $6$ & $1/2$ & $12$ \\
$\Delta^{2}$ & $6$ & $1/2$ & $12$ \\
$B^{2}$ & $7$ & $1/2$ & $14$ \\
$B^{3}$ & $\geq 21$ & $\leq 0.49439$ & $\leq 41.53398\ldots$ \\
$B^{d}$ & $O(2^{d}d^{3/2}\ln d)$ & $\leq 1/2$ & $O(2^{d}d^{3/2}\ln d)$ \\
$C^{d}$ & $2^{d}$ & $1/2$ & $2^{d+1}$ \\
$H\oplus \ell$ & $12$ & $1/2$ & $24$ \\
$\Delta^{2}\oplus \ell$ & $12$ & $1/2$ & $24$ \\
$B^{2}\oplus \ell$ & $14$ & $1/2$ & $28$ \\
\vdots & \vdots & \vdots & \vdots \\
\noalign{\smallskip}\hline
\end{tabular}
\caption{Known values (or estimates) of $\coin$. The table can be extended indefinitely by including values (or estimates) of $\coin(K\oplus L)$ and by including upper bounds on $\coin(K+L)$, for any convex bodies $K$ and $L$ appearing in the table.}
\end{table}


So far, we have computed covering index mostly for planar convex bodies. Since in higher dimensions very little is known about $\gamma_{m}(K)$, it is a lot harder to determine exact values of $\coin$. In some cases it is possible to derive upper bounds. For instance, we make the following observation for $d$-dimensional balls.  

\begin{figure}[ht]
\label{fig:balls}
\centering
\includegraphics[scale=0.5]{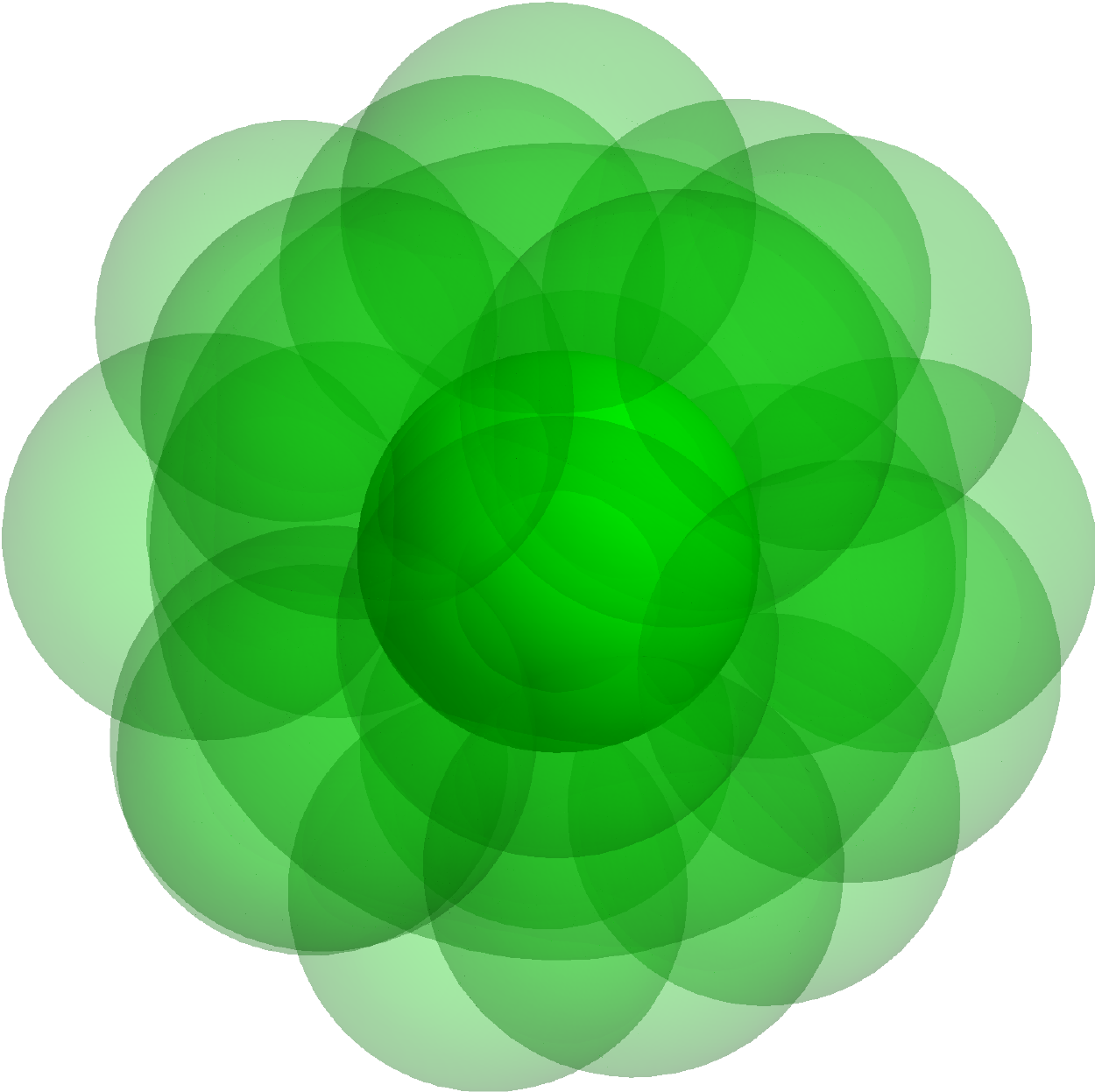}
\vspace{4mm}
\caption{A covering of $B^{3}$ by 21 homothets of homothety ratio $0.49439$. One homothet is centered at the center of $B^{3}$, while the centers of the other $20$ homothets lie at a distance of $0.8595$ from the center of $B^{3}$ (see Remark \ref{balls}).}
\end{figure}

\begin{remark}\label{balls}
Recently, O'Rourke \cite{orourke} raised the question as to what is the minimum number of homothets of homothety ratio $1/2$ needed to cover $B^{3}$. Using spherical cap coverings, Wynn \cite{orourke} showed this number to be $21$. Thus $N_{1/2}(B^{3})=21$. In fact, Wynn also demonstrated that if the homothety ratio is decreased to $0.49439$, we can still cover $B^{3}$ by 21 homothets. Figure \ref{fig:balls} illustrates such a covering. (On request one can obtain the Mathematica code to generate this covering from the second named author.) Therefore, $\coin(B^{3})\leq f_{21}(B^{3}) \leq 41.5339886473764$. Moreover, by applying Corollary \ref{cylinder}, $\coin(B^{3}\oplus \ell) = 84$. 

In general, Verger-Gaugry \cite{verger-gaugry} showed that in any dimension $d\geq 2$ one can cover a ball of radius $1/2<r\leq 1$ with $O((2r)^{d-1}d^{3/2}\ln d)$ balls of radius $1/2$. Substituting $r=1$ and performing the standard covering index calculations shows that $\coin(B^{d})=O(2^{d}d^{3/2}\ln d)$.   
\end{remark}

The above remark is interesting for three different reasons. First, we observed that for $B^{2}$, $C^{2}$ and $\Delta^{2}$, the value of covering index is associated with the homothety ratio $1/2$. Theorem \ref{balls} provides us an example, namely $B^{3}$, where covering index is associated with a homothety ratio strictly less than $1/2$. Thus half-sized homothets do not always correspond to the covering index values. Second, Remark \ref{balls} provides another example of a situation when inequality (\ref{two}) is strict, as $\gamma_{N_{1/2}(B^{3})}(B^{3})=\gamma_{21}(B^{3})<1/2$. Finally, since $B^{2}$ maximizes the covering index in the plane, it can be asked if the same is true for $B^{d}$ in higher dimensions. 

\begin{problem}\label{ball-max}
For any $d$-dimensional convex body $K$, prove or disprove that $\coin(K)\leq \coin(B^{d})$ holds. 
\end{problem} 

An affirmative answer to Problem \ref{ball-max} would considerably improve the known general (Rogers-type) upper bound on the illumination number. It is known (e.g., see \cite{bezdek-book}) that for any $d$-dimensional convex body $K$, in general  
\begin{equation}\label{illumination}
I(K)\leq \binom{2d}{d} d(\ln d + \ln\ln d + 5)=O(4^{d}\sqrt{d} \ln d), 
\end{equation}
\noindent and if, in addition, $K$ is $o$-symmetric, then 
\begin{equation}\label{symmetric}
I(K)\leq 2^{d} d(\ln d + \ln\ln d + 5)=O(2^{d}d \ln d).  
\end{equation}

If $B^{d}$ maximizes the covering index, then the general asymptotic bound in (\ref{illumination}) would improve to within a factor $\sqrt{d}$ of the bound (\ref{symmetric}) in the $o$-symmetric case. 

We conclude by listing some of the known values (or estimates) of the covering index. We remark that Table 1 can be continued indefinitely by using the operations of direct vector addition and the Minkowski addition, resulting in infinitely many convex bodies for which we know exact values of $\coin$, and infinitely many convex bodies for which we can estimate $\coin$.



\section{The weak covering index}\label{variations}
In this section, we introduce a variant of the covering index, which we call the \textit{weak covering index}.    

\begin{definition}\label{wcoin-def}
Let $K$ be a $d$-dimensional convex body. We define the \textit{weak covering index} of $K$ as 
\[
\wcoin(K)=\inf \left\{\frac{m}{1-\gamma_{m}(K)}: \gamma_{m}(K)<1, m\in \mathbb{N}\right\}. 
\]
\end{definition}

Let us define   
\[
g_{m}(K)=\left\{\begin{split} \frac{m}{1-\gamma_{m}(K)}, \ \ \ \ \ & \ \textnormal{ if } 0< \gamma_{m}(K)<1,\\
+\infty, \ \ \ \ \ \ \ \ \ \ \ \ \ \ & \ \textnormal{ if }  \gamma_{m}(K)=1.
\end{split}\right.
\]
Then $\wcoin(K)=\inf \left\{g_{m}(K): m\in {\mathbb{N}}\right\}$. 

Some properties of the weak covering index naturally mirror the corresponding properties of the covering index. These include Proposition \ref{bound}, Lemma \ref{monotonic}, Theorem \ref{productnew}, Corollary \ref{product} and Theorem \ref{minkowski}. The corresponding statements can be obtained by replacing $\coin$ with $\wcoin$ and $f_{m}$ by $g_{m}$ throughout. 

However, no suitable analogue of Corollary \ref{cylinder} exists for $\wcoin$. As a result, we can only estimate the weak covering index of 1-codimensional cylinders in Table 2. Also 
the discussed aspects of continuity of the covering index (Section \ref{monotonic-continuity}) seem to be lost for the weak covering index. 

\begin{table}[htb]\label{table:wvalues}
\centering
\begin{tabular}{llll}
\hline\noalign{\smallskip}
$K$ & $m$ & $\gamma_{m}(K)$ & $\wcoin(K)$\\
\noalign{\smallskip}\hline\noalign{\smallskip}
$\ell$ & $2$ & $1/2$ & $4$\\ 
$H$ & $3$ & $2/3$ & $9$ \\
$\Delta^{2}$ & $3$ & $2/3$ & $9$ \\
$B^{2}$ & 5 & $0.609\ldots$ & $12.800\ldots$\\
$C^{d}$ & $2^{d}$ & $1/2$ & $2^{d+1}$ \\
$\Delta^{d}$ & $\geq d+1$ & $\leq \frac{d}{d+1}$ & $\leq (d+1)^{2}$ \\
$H\oplus \ell$ & $\geq 6$ & $\leq 2/3$ & $\leq 18$\\
$\Delta^{2}\oplus \ell$ & $\geq 6$ & $\leq 2/3$ & $\leq 18$ \\
$B^{2}\oplus \ell$ & $\geq 10$ & $\leq 0.609\ldots$ & $\leq 25.60\ldots$ \\
\vdots & \vdots & \vdots & \vdots\\
\noalign{\smallskip}\hline
\end{tabular}
\caption{Known values (or estimates) of $\wcoin(\cdot)$ together with the corresponding $m$ and $\gamma_{m}(\cdot)$.}
\end{table}

More importantly, the problem of finding the maximizers and minimizers of $\wcoin$ seems a lot harder than the corresponding problem for $\coin$. We only know a minimizer for $d=2$. 

\begin{theorem}\label{wcube}
Let $K\in {\cal{K}}^{2}$, then $\wcoin(K)\geq \wcoin(C^{2}) = 8$. Thus the (affine) square minimizes the weak covering index in the plane. 
\end{theorem}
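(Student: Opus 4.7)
The plan is to first establish $\wcoin(C^2) = 8$ directly and then verify $g_m(K) \geq 8$ for every $K \in {\cal{K}}^2$ in finitely many cases. For $\wcoin(C^2)$, the natural covering of $C^2$ by its four half-sized quadrants gives $\gamma_4(C^2) = 1/2$ and $g_4(C^2) = 8$. For $m \leq 3$, each strict homothet $\lambda C^2 + t$ with $\lambda < 1$ contains at most one of the four vertices of $C^2$, so $\gamma_m(C^2) = 1$ and $g_m(C^2) = +\infty$. For $m \geq 4$, the volumetric inequality $m\gamma_m(C^2)^2 \geq 1$ yields $g_m(C^2) \geq m\sqrt{m}/(\sqrt{m}-1)$; writing $u=\sqrt{m}$ shows this expression is increasing in $m$ for $m \geq 3$ (derivative $u^2(2u-3)/(u-1)^2 \geq 0$) with value exactly $8$ at $m = 4$. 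Hence $\wcoin(C^2) = 8$.

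For an arbitrary $K \in {\cal{K}}^2$, the trivial inequality $g_m(K) \geq m$ (since $\gamma_m(K) \geq 0$) shows that $g_m(K) < 8$ forces $m \leq 7$, so it suffices to prove $g_m(K) \geq 8$ for $m \in \{1, 2, \ldots, 7\}$. The cases $m = 1, 2$ follow from the planar bound $I(K) \geq 3$ (no one or two strict positive homothets can cover a $2$-dimensional convex set), giving $\gamma_m(K) = 1$ and $g_m(K) = +\infty$. For $m \in \{4, 5, 6, 7\}$, the same volumetric estimate $\gamma_m(K) \geq 1/\sqrt{m}$ used for $C^2$ yields $g_m(K) \geq m\sqrt{m}/(\sqrt{m}-1) \geq 8$. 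This reduces the problem to the single case $m = 3$.

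For $m = 3$, I would argue by dichotomy on $K$. If $K$ is an affine parallelogram, its four vertices cannot all be captured by three strict homothets, so $\gamma_3(K) = 1$ and $g_3(K) = +\infty$. Otherwise $K$ is non-parallelogram, and I would invoke the classical planar covering estimate $\gamma_3(K) \geq 2/3$, with equality iff $K$ is an affine triangle (consistent with the $\Delta^2$ and $H$ entries of Table~2), producing $g_3(K) \geq 9 > 8$. The main obstacle is precisely this subcase: the pure volumetric argument only delivers $\gamma_3(K) \geq 1/\sqrt{3}$, whence $g_3(K) \geq (9 + 3\sqrt{3})/2 \approx 7.098$, which falls just short of the target $8$. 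Closing this small gap is not a volumetric fact but an illumination-geometric one, and requires either citing the sharp lower bound $\gamma_3(K) \geq 2/3$ for non-parallelogram planar bodies or reproving it via, e.g., an inscribed largest-area affine triangle in $K$ together with a centroid-median computation controlling the position of the three homothet centers.
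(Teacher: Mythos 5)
Your proposal is correct and follows essentially the same route as the paper: a volumetric bound disposing of all $m\geq 4$, the trivial impossibility of $m\leq 2$, and the sharp planar bound $\gamma_3(K)\geq 2/3$ for the remaining case $m=3$. The one input you flag as needing a citation is exactly Belousov's theorem ($\min_{K\in{\cal K}^2}\gamma_3(K)=2/3$), which is what the paper invokes, so there is no real gap.
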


\begin{proof}
If $K$ is such that $\wcoin(K)=g_{m}(K)< 8$, then from the proof of Theorem \ref{cube}, $m<4$. Since any convex body in ${\cal{K}}^{2}$ requires at least 3 smaller positive homothets to cover it, we only need to consider the case $m=3$. But Belousov \cite{belousov1} showed that 
\[\min_{K\in {\cal{K}}^{2}} \gamma_{3}(K)=\frac{2}{3}
\]
and so, $g_{3}(K)\geq 9 > \wcoin(C^{2})$, a contradiction.  
\end{proof}

It is worth noting that for $d\geq 3$, the simplex $\Delta^{d}$ gives a smaller value ($\leq (d+1)^{2}$) of $\wcoin$ than the $d$-cube $C^{d}$. Thus $\wcoin$ has different minimizers in different dimensions. 


\section{Bounds on the covering indices}\label{improved}

In this section, we obtain upper bounds on the covering and weak covering index in the spirit of Rogers' bounds on covering numbers. The main ingredients include Rogers' estimate  \cite{rogers} of the infimum $\theta(K)$ of the covering density of ${\mathbb{E}}^{d}$ by translates of the convex body $K$, namely, for $d\geq 2$, 
\[\theta(K)\leq d(\ln d + \ln\ln d + 5), 
\]
the Rogers-Shephard inequality \cite{rogers-shephard} 
\[\vol(K-K)\leq \binom{2d}{d}\vol(K)
\]
on the volume of the difference body, and a well-known result of Rogers and Zong \cite{rogers-zong}, which states that for $d$-dimensional convex bodies $K$ and $L$, $d\geq 2$,  
\begin{equation}\label{roger-zong}
N(K,L)\leq \frac{\vol(K - L)}{\vol(L)}{\theta}(L), 
\end{equation} 
with $K-L=K+(-L)$.  

The above inequalities yield the well-known upper bounds (\ref{illumination}) and (\ref{symmetric}) on the illumination number. In addition, we mention Lassak's general upper bound \cite{lassak-bound} on the illumination number
\begin{equation}\label{eq:lassak}
I(K)\leq (d+1)d^{d-1} - (d-2)(d-1)^{d-1},
\end{equation}
which is sharper than (\ref{illumination}) for small $d$, although we do not use it here.



\begin{theorem}\label{wrogers}
Given $K\in {\cal{K}}^{d}$, $d\geq 2$ and a real number $0<\lambda<1$, we have 
\begin{equation}\label{eq:main}
\wcoin(K) \leq \frac{N_{\lambda}(K)}{1-\lambda} \leq \left\{\begin{split} &  \frac{(1+\lambda)^{d}}{\lambda^{d}(1-\lambda)}d(\ln d + \ln\ln d + 5) , \ \ \ \ \ \ \ \ \ \ \ \ \ \ \ \ \ \ \ \ \ \ \ \ \ \ \ \ \ \textnormal{ if } K \textnormal{ is } o\textnormal{-symmetric},\\
& \frac{1}{\lambda^{d}(1-\lambda)}\left(\binom{2d}{d}^{1/d} - 1 +\lambda\right)^{d} d(\ln d + \ln\ln d + 5), \ \ \ \textnormal{otherwise}. 
\end{split}\right.
\end{equation}
\end{theorem}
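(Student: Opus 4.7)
The plan is to prove the two inequalities separately. The first inequality $\wcoin(K) \leq \frac{N_\lambda(K)}{1-\lambda}$ is essentially immediate from the definition of $\wcoin$ together with relation (\ref{two}). Setting $m := N_\lambda(K)$, relation (\ref{two}) gives $\gamma_m(K) \leq \lambda < 1$, so $g_m(K) = \frac{m}{1-\gamma_m(K)} \leq \frac{N_\lambda(K)}{1-\lambda}$, and $\wcoin(K) \leq g_m(K)$ by definition.

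For the second inequality, I would apply the Rogers--Zong estimate (\ref{roger-zong}) with $L = \lambda K$, which gives
\[
N_\lambda(K) \;=\; N(K,\lambda K) \;\leq\; \frac{\vol(K - \lambda K)}{\vol(\lambda K)}\,\theta(\lambda K) \;=\; \frac{\vol(K - \lambda K)}{\lambda^{d}\vol(K)}\,\theta(K),
\]
and then use Rogers' bound $\theta(K) \leq d(\ln d + \ln \ln d + 5)$. Everything then reduces to bounding $\vol(K - \lambda K)$ in terms of $\vol(K)$. In the $o$-symmetric case this is immediate since $-K = K$ gives $K - \lambda K = (1+\lambda)K$, yielding $\vol(K - \lambda K) = (1+\lambda)^d \vol(K)$ and the first case of (\ref{eq:main}).

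The main point of the proof is the general (non-symmetric) case, where the key insight is the set-theoretic identity
\[
K - K \;=\; (K - \lambda K) + (1-\lambda)(-K),
\]
valid for $0 \leq \lambda \leq 1$. This is a short verification using convexity of $K$: the inclusion $\supseteq$ follows since $\lambda y + (1-\lambda)z \in K$ for $y,z\in K$, and $\subseteq$ follows by taking $y = z$. Once this identity is in hand, the Brunn--Minkowski inequality applied to its right-hand side gives
\[
\vol(K-K)^{1/d} \;\geq\; \vol(K - \lambda K)^{1/d} + (1-\lambda)\vol(K)^{1/d},
\]
and then the Rogers--Shephard inequality $\vol(K - K) \leq \binom{2d}{d}\vol(K)$ yields
\[
\vol(K - \lambda K)^{1/d} \;\leq\; \left(\tbinom{2d}{d}^{1/d} - 1 + \lambda\right)\vol(K)^{1/d}.
\]
Raising to the $d$-th power and plugging back into the Rogers--Zong bound gives the second case of (\ref{eq:main}). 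The only nontrivial step is realizing that Brunn--Minkowski, although it goes in the ``wrong direction'' for bounding $\vol(K-\lambda K)$ from above, can be used backwards through the identity above; this is where I expect the main (conceptual) obstacle to lie, but once the identity is spotted the rest is routine.
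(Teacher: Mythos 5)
Your proposal is correct and follows essentially the same route as the paper: Rogers--Zong plus Rogers' bound on $\theta(K)$, the symmetric case via $K-\lambda K=(1+\lambda)K$, and the general case via Brunn--Minkowski applied to the decomposition $K-K=(K-\lambda K)+(1-\lambda)(-K)$ (the paper uses the identical identity, merely rescaled by $\lambda^{-1}$) followed by Rogers--Shephard. Your handling of the first inequality via relation (\ref{two}) is also exactly the intended (and in the paper, unstated) justification.
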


\begin{proof}
Consider a minimal cardinality covering of $K$ by homothets $\lambda K+t_{i}$, for some $t_{i}\in {\mathbb{E}}^{d}$, $i=1,\ldots,\break N_{\lambda}\left(K\right)$. By (\ref{roger-zong}), we have  
\begin{align*}
N_{\lambda}\left(K\right) &\leq \frac{\vol\left(K - \lambda K\right)}{\vol\left(\lambda K\right)}{\ \theta}\left(\lambda K\right) = \frac{\vol\left(K - \lambda K\right)}{\vol\left(\lambda K\right)}{\ \theta}(K)
\\&= \frac{\vol\left(K - \lambda K\right)}{\vol\left(\lambda K\right)}d(\ln d + \ln\ln d + 5). \numberthis \label{eq:inter}
\end{align*}
%

If $K$ is $o$-symmetric, then $\vol(K-\lambda K) = \vol((1+\lambda)K)=\frac{(1+\lambda)^{d}}{\lambda^{d}}\vol(\lambda K)$ and so, (\ref{eq:inter}) implies 
$$N_{\lambda}\left(K\right) \leq \frac{(1+\lambda)^{d}}{\lambda^{d}}d(\ln d + \ln\ln d + 5)\ .$$

In the general case, applying the Brunn-Minkowski inequality gives 
\begin{align*}
\lambda^{-1} \vol(K-K)^{1/d} &= \vol\left(\left(\lambda^{-1}K-K\right)+\left(-(\lambda^{-1}-1)K\right)\right)^{1/d} \\
&\geq \vol\left(\lambda^{-1}K-K\right)^{1/d} + \vol((\lambda^{-1}-1)K)^{1/d} \\
&= \lambda^{-1} \vol\left(K-\lambda K\right)^{1/d} + (\lambda^{-1}-1)\vol(K)^{1/d},  
\end{align*}
which gives 
\[\vol\left(K-\lambda K\right)^{1/d} \leq \vol(K-K)^{1/d} - (\lambda^{-1}-1)\lambda \vol(K)^{1/d}.
\] 

By the Rogers-Shephard inequality, we have 
\[\vol\left(K-\lambda K\right)^{1/d} \leq \binom{2d}{d}^{1/d}\vol(K)^{1/d} - (1 - \lambda)\vol(K)^{1/d}=\lambda^{-1}\left( \binom{2d}{d}^{1/d} - 1 + \lambda\right)\vol(\lambda K)^{1/d} .
\]

Substituting for $\vol\left(K-\lambda K\right)$ in (\ref{eq:inter}) gives   
\[N_{\lambda}\left(K\right) \leq \lambda^{-d} \left(\binom{2d}{d}^{1/d} - 1 + \lambda\right)^{d} d(\ln d + \ln\ln d + 5). 
\]

Finally, note that clearly $\wcoin(K) \leq \frac{N_{\lambda}\left(K\right)}{1-\lambda}$. The upper bounds in (\ref{eq:main}) follow. 
\end{proof}


For $\lambda = \frac{d}{d+1}$, Theorem~\ref{wrogers} gives the following upper bounds on the weak covering index. 

\begin{corollary}\label{swanepoel3}
Let $K\in {\cal{K}}^{d}$, $d\geq 2$. Then 
\[\wcoin(K) < \left\{\begin{split} & 2^{d} \sqrt{e} (d+1) d(\ln d + \ln\ln d + 5) = O(2^{d}d^{2}\ln d) , \ \ \ \ \ \ \ \ \ \ \ \textnormal{ if } K \textnormal{ is } o\textnormal{-symmetric},\\
& e(d+1)\left(\binom{2d}{d}^{1/d} - 1 +\frac{d}{d+1}\right)^{d} d(\ln d + \ln\ln d + 5) 
\\ & \ \ \ \ \ \ \ \ \ \ \ \ \ \ \ \ \ \ \ \ \ \ \ \ \ \ \ \ \ \ \ \ \ \ \ \ \ \ = O(4^{d}d^{3/2}\ln d), \ \ \ \ \ \ \ \ \ \ \  \textnormal{otherwise}. 
\end{split}\right.
\]
\end{corollary}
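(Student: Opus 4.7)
The plan is to specialize Theorem~\ref{wrogers} at $\lambda = d/(d+1)$ and then carry out the straightforward algebraic simplifications. No new geometric input is needed beyond Theorem~\ref{wrogers} itself (which already packages the Rogers covering density estimate, the Rogers--Shephard inequality, and the Brunn--Minkowski inequality) together with Stirling's formula for the asymptotics. The main obstacle is really just bookkeeping: separating the two cases cleanly and making sure the strict inequalities coming from $(1+1/n)^n < e$ survive into the final bound.

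First I would compute the three elementary quantities that arise at $\lambda = d/(d+1)$. One has $1-\lambda = 1/(d+1)$ (so $1/(1-\lambda) = d+1$), and $1/\lambda^d = (1+1/d)^d < e$. For the $o$-symmetric branch of~\eqref{eq:main} the relevant factor is
\[
\frac{(1+\lambda)^d}{\lambda^d(1-\lambda)} = (d+1)\left(1+\tfrac{1}{\lambda}\right)^d = (d+1)\left(2 + \tfrac{1}{d}\right)^d = 2^d(d+1)\left(1 + \tfrac{1}{2d}\right)^d.
\]
Applying $(1+1/n)^n < e$ with $n = 2d$ gives $(1+1/(2d))^d < \sqrt{e}$, and substituting into Theorem~\ref{wrogers} yields $\wcoin(K) < 2^d \sqrt{e}(d+1)d(\ln d + \ln\ln d + 5) = O(2^d d^2 \ln d)$, the first asserted bound.

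For the general case, the prefactor is $1/(\lambda^d(1-\lambda)) = (d+1)(1+1/d)^d < e(d+1)$, while the parenthesized base simplifies via $-1 + d/(d+1) = -1/(d+1)$ to $\binom{2d}{d}^{1/d} - 1/(d+1)$. Combined, this gives the closed-form bound $\wcoin(K) < e(d+1)\bigl(\binom{2d}{d}^{1/d} - 1 + d/(d+1)\bigr)^d d(\ln d + \ln\ln d + 5)$ stated in the corollary. For the $O(4^d d^{3/2} \ln d)$ asymptotic, I would use the crude monotonicity estimate $\bigl(\binom{2d}{d}^{1/d} - 1/(d+1)\bigr)^d \le \binom{2d}{d}$ together with Stirling's formula $\binom{2d}{d} = 4^d/\sqrt{\pi d}\,(1+o(1))$. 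This turns the overall bound into $O\bigl((d+1) \cdot 4^d/\sqrt{d} \cdot d\ln d\bigr) = O(4^d d^{3/2}\ln d)$, completing the proof. The only subtlety worth checking is that dropping the negative term $-1/(d+1)$ before taking the $d$-th power loses only a bounded factor (in fact it loses nothing for an upper estimate), so the asymptotic order is preserved.
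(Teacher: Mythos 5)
Your proposal is correct and takes exactly the same route as the paper, which obtains the corollary simply by substituting $\lambda = d/(d+1)$ into Theorem~\ref{wrogers}; your simplifications, the strict estimates $(1+1/(2d))^{d}<\sqrt{e}$ and $(1+1/d)^{d}<e$, and the Stirling-based asymptotics are all sound. In fact, your write-up is considerably more detailed than the paper's one-sentence derivation.
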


Finally, in order to determine an upper bound on $\coin$, one only needs to apply \eqref{eq:main} with $\lambda=1/2$.  

\begin{corollary}\label{rogers}
Given $K\in {\cal{K}}^{d}$, $d\geq 2$, we have 
\[
\coin(K) \leq 2N_{1/2}(K) \leq \left\{\begin{split}&3^d(2d)(\ln d + \ln\ln d + 5) = O(3^{d}d\ln d),\ \ \ \ \ \ \ \ \textnormal{ if } K \textnormal{ is } o\textnormal{-symmetric},\ \ \ \ \ \ 
\\ &2^{d+1}\left( \binom{2d}{d}^{\frac{1}{d}}-\frac{1}{2}\right)^d  d(\ln d + \ln\ln d + 5) 
\\ &\ \ \ \ \ \ \ \ \ \ \ \ \ \ \ \ \ \ \ \ \ \ \ \ \ \ \ \ \ \ = O(7^{d}\sqrt{d}\ln d) , \ \ \ \ \ \ \textnormal{otherwise}. 
\end{split}\right.
\] 
\end{corollary}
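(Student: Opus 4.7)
The plan is to deduce Corollary \ref{rogers} directly from Theorem \ref{wrogers} by substituting $\lambda=1/2$, after first bridging from $N_{1/2}(K)$ to $\coin(K)$.

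First I would establish the relation $\coin(K)\le 2N_{1/2}(K)$. Any minimal-cardinality covering of $K$ by $N_{1/2}(K)$ homothets of ratio $1/2$ certifies that $\gamma_{N_{1/2}(K)}(K)\le 1/2$, so $m=N_{1/2}(K)$ is admissible in the infimum defining $\coin$, and
\[
\coin(K)\le f_{N_{1/2}(K)}(K)=\frac{N_{1/2}(K)}{1-\gamma_{N_{1/2}(K)}(K)}\le \frac{N_{1/2}(K)}{1-1/2}=2N_{1/2}(K).
\]
This is in fact just the upper half of the bracketing \eqref{asymptotic} noted earlier.

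Second I would plug $\lambda=1/2$ into the right-hand side of \eqref{eq:main} from Theorem \ref{wrogers}. In the $o$-symmetric case the prefactor becomes $(1+\lambda)^d/(\lambda^d(1-\lambda))=(3/2)^d/\bigl((1/2)^d(1/2)\bigr)=2\cdot 3^d$, producing $3^d(2d)(\ln d+\ln\ln d+5)$. In the general case the prefactor becomes $1/(\lambda^d(1-\lambda))=2^{d+1}$, producing $2^{d+1}\bigl(\binom{2d}{d}^{1/d}-\tfrac12\bigr)^d d(\ln d+\ln\ln d+5)$, matching the display in the corollary.

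The final task is to extract the asymptotic orders. The symmetric case is immediate: $2d\cdot 3^d\cdot(\ln d+\ln\ln d+5)=O(3^d d\ln d)$. For the general case I would use the crude $\binom{2d}{d}\le 4^d$ to see $\binom{2d}{d}^{1/d}-1/2\le 7/2$, hence $2^{d+1}(7/2)^d=2\cdot 7^d$, and then invoke the sharper Stirling estimate $\binom{2d}{d}\le 4^d/\sqrt{\pi d}$ to save a polynomial factor in $d$, yielding the claimed $O(7^d\sqrt{d}\ln d)$ bound. No serious obstacle is anticipated: Theorem \ref{wrogers} does the real work, and the only care required is bookkeeping the prefactor values at $\lambda=1/2$ and the Stirling estimate used to convert the explicit constant into the stated $O$-notation.
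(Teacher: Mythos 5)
Your proposal is correct and follows the paper's own route: the paper proves this corollary simply by applying \eqref{eq:main} with $\lambda=1/2$, with the bridge $\coin(K)\le 2N_{1/2}(K)$ already recorded as the upper half of \eqref{asymptotic} (via $\gamma_{N_{1/2}(K)}(K)\le 1/2$). Your extra care in verifying the prefactors $2\cdot 3^d$ and $2^{d+1}$ and in noting that the Stirling refinement $\binom{2d}{d}\le 4^d/\sqrt{\pi d}$ is what yields the stated $O(7^d\sqrt{d}\ln d)$ (a crude $\binom{2d}{d}\le 4^d$ only gives $O(7^d d\ln d)$) is accurate bookkeeping that the paper leaves implicit.
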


\section*{Acknowledgments}
 The first author is partially supported by a Natural Sciences and Engineering Research Council of Canada Discovery Grant. The second author is supported by a Vanier Canada Graduate Scholarship (NSERC), an Izaak Walton Killam Memorial Scholarship and Alberta Innovates Technology Futures (AITF).

\bigskip

\noindent K\'{a}roly Bezdek \\
 \small{Department of Mathematics \& Statistics, University of Calgary}\\
 \small{2500 University Drive NW Calgary AB, Canada T2N 1N4}\\
 \small{\texttt{bezdek@math.ucalgary.ca}}

\normalsize

\bigskip
\noindent and
\bigskip

\noindent Muhammad A. Khan \\
 \small{Department of Mathematics \& Statistics, University of Calgary}\\
 \small{2500 University Drive NW Calgary AB, Canada T2N 1N4}\\
 \small{\texttt{muhammkh@ucalgary.ca}}

\end{document}